\newtheorem{theorem}{\rm\bf Theorem}[section]
\newtheorem{proposition}[theorem]{\rm\bf Proposition}
\newtheorem{lemma}[theorem]{\rm\bf Lemma}
\newtheorem*{theorem*}{Theorem}
\newtheorem*{theorem 1}{\rm\bf Proposition 1}
\newtheorem*{theorem 2}{\rm\bf Proposition 2}
\newtheorem*{conj*}{Conjecture}
\theoremstyle{definition}
\theoremstyle{remark}
\newtheorem{remark}[theorem]{\rm\bf Remark}
\begin{document}

\title{Density-dependent feedback in age-structured populations}

\author[J. Andersson, V. Kozlov, V. Tkachev et al]{J. Andersson}
\address{Department of Mathematics, Link\"oping University, Sweden}
\email{jonathan.andersson@liu.se}
\author[]{V. Kozlov}
\address{Department of Mathematics, Link\"oping University, Sweden} \email{vladimir.kozlov@liu.se}           
\author[]{V.G. Tkachev}
\address{Department of Mathematics, Link\"oping University, Sweden}
\email{vladimir.tkatjev@liu.se}           
\author[]{S. Radosavljevic}
\address{Stockholm Resilience Centre , Stockholm University, Sweden}
\email{sonja.radosavljevic@su.se}
\author[]{U. Wennergren}
\address{Department of Physics, Chemistry, and Biology, Link\"oping University}
\email{uno.wennergren@liu.se}

\begin{abstract}
The population size has far-reaching effects on the fitness of the population, that, in its turn  influences the population extinction or persistence. Understanding the density- and age-dependent factors  will facilitate more accurate predictions about the population dynamics and its asymptotic behaviour.  In this paper, we develop a rigourous mathematical analysis to study  positive and negative effects of increased population density in the classical nonlinear age-structured population model introduced by Gurtin \& MacCamy in the late 1970s. One of our main results expresses the global stability of the system in terms of the newborn function only. We also derive the existence of a threshold population size implying the population extinction, which is well-known in population dynamics as an Allee effect.
\keywords{age structure\and
density dependence\and
Allee effect}

\end{abstract}

\maketitle





\section{Introduction}

In biological populations, density-dependent regulation represents change in individual fitness caused by changes in population size or density.
The negative density-dependency, often explained by intra-specific competition and overcrowding effect, is characterized by decline in fitness with increase in populations size or density. In sharp contrast with this is the positive density-dependency, or the Allee effect, characterized by increase in fitness with increase in population size. Various mechanisms have been considered as a source of the Allee effect, \cite{Berec,Courchamp,Courchamp99}, pointing out that increase in fitness can come though increase in birth rate, decrease in death rate or both.

Mathematical models of age structured populations usually use density dependent vital rates without any special regard to the type of feedback that density-dependence produces; see for example \cite{BrauerChavez,Cip,DGH,Eld,Grt,Webb}. On the other hand, some authors investigate consequences of the Allee effect in age-structured populations, see for instance \cite{Cus1,Cus2}, or intraspecific competition  \cite{we2}.

The importance of this article is twofold. First, we expand mathematical theory of age-structured population dynamics by including density-dependent regulation. Second, Allee effect may have a positive contribution to population survival. In the age of massive extinction of species, it is therefore important to study under which conditions population may survive.

In this paper we study consequences of different types of density-dependence on permanence of age-structured populations.  It is known that the general problem is descibed by the  Gurtin-MacCamy delay equations and the stability of steady states is governed by nonlinear characteristic equations that are not easy to analyze analytically; see, however, a recent book of Iannelli and Milner \cite{Iannelli} where stability of equilibria and local asymptotic behavior of solutions is studied a model with several significant variables (the so-called \textit{sizes}). Typical sizes are the total population, or number of juveniles or adults at time $t$.

Using the framework  proposed in  \cite{Iannelli}, we improve our previous assumptions used in \cite{we2}, \cite{KRTW} that intraspecific competition occurs only among individuals of the same age by using more realistic age, and the density-dependent mortality $\mu(a,P(t))$ and the fertility $\beta(a,Q(t))$, where
$$
P(t)=\int_0^{\infty}p(a)n(a,t)\,da,\qquad  Q(t)=\int_0^{\infty}q(a)n(a,t)\,da
$$ are weighted populations. Here $n(a,t)$ denotes the number of individuals of age $a$ at time $t$ and $p(a)$, $q(a)$ are certain weight functions. For example, the choice $p(a)\equiv 1$ for $A_1\le a\le A_2$ and $p(a)\equiv 0$ otherwise,  yields the population size at the time $t$ in the age interval $[A_1,A_2]$.

One of our main assumptions is that mortality rate tends to infinity with the population size. This assumption is having a biological explanation: intraspecific competition is increasing in any large population due to limited resources in the habitat. Important consequence of this assumption, stated in Section \ref{boundedness}, is existence of an upper bound for a population. Moreover, this result is an improvement of a similar result in \cite{bound}, which is made possible by allowing two different weight functions for the mortality and fertility rates and by relaxing condition of Lipschitz continuity for the weight functions.

In Section \ref{stability} a stability analysis is performed on the trivial equilibrium $(\rho,P,Q)=(0,0,0)$. The stability of the trivial equilibrium depends on the net reproductive rate $$R_0=\int_0^\infty \beta(a,0)e^{-\int_0^a\mu(v,0)dv}da.$$

Our main result expresses the global stability of the system in terms of the newborn function only. More specifically, in Section \ref{largetimebehavior} we investigate the global stability of the system in terms of newborns only. We restrain the mortality rate to be increasing with $P$ and thus we do not incorporate the Allee effect on the mortality. Under this assumption we derive conditions based on the net reproduction rate $R_0$ for extinction and persistence. In the case $R_0\leq 1$ the population will go extinct and in the case $R_0>1$ the population will be persistent.

In Section \ref{permanence} we remove the restriction on the mortality function made in chapter \ref{largetimebehavior}. This corresponds to such an intraspecific mechanism as the Allee effect. More precisely,  if $R_0<1$ we conclude that the population either becomes extinct or is persistent. We note that if the number of newborns ever is small enough then this implies extinction. This effectively means that the trivial equilibrium is locally stable.

\section{The model setup}
Density dependent regulation acts on a population by changing its birth and death rates. Gurtin and MacCamy \cite{Grt} and Chipot \cite{Cip} assumed that the strength of density dependent regulation always depend on the total population, while Kozlov et al. \cite{we2} took the opposite approach by assuming that competition occurs only within each age-class. Here, we will follow the model from Chapter 5 of \cite{Iannelli} with some restrictions. In order to encompass various mechanisms through which density dependent regulation can manifest, we introduce the weighted age-class functions
\begin{align}\label{p}
P(t)=\int_0^{\infty}p(a)n(a,t)\,da,
\end{align}
and
\begin{align}\label{q}
Q(t)=\int_0^{\infty}q(a)n(a,t)\,da,
\end{align}
where $n(a,t)$ is the number of individuals of age $a$ at time $t$ and $p(a)$ and $q(a)$ are non-negative weight functions. The balance equation is then:
\begin{align}\label{gen1}
   \frac{\partial{n(a,t)}}{\partial t}+\frac{\partial{n(a,t)}}{\partial a} &= -\mu(a,P(t))n(a,t), \quad a,t>0,
\end{align}
where the function $\mu(a,P(t))$ is the death rate dependent on the weighted age-class function $P(t)$. The boundary condition is given by
\begin{align}\label{genbc1}
n(0,t)=\int_0^{\infty}\beta(a,Q(t))n(a,t)\,da, \quad t> 0,
\end{align}
where the birth rate $\beta(a,Q(t))$ incorporates effect of age-class density through the weighted age-class function $Q(t)$. The initial condition is given by:
\begin{align}\label{genic1}
n(a,0)=g(a), \quad a>0.
\end{align}

The boundary-initial value problem (\ref{gen1})--(\ref{genic1}), together with the weighted age-class-functions (\ref{p}) and (\ref{q}), constitutes a density-dependent population growth model. For purposes of our analysis and in line with the theory in Chapter 5 of \cite{Iannelli}, we assume that the parameters satisfy following conditions:

\begin{enumerate}[label=\textrm{($H_{\arabic*}$)},series=lafter]
\item\label{H'1}
The function $\mu(a,x)$ is assumed to be of the form
\begin{equation}
\mu(a,x)=\mu_0(a)+\mathscr{M}(a,x),
\end{equation}
where for some $a_\dagger>0$
\begin{equation}
\mu_0\in L_{loc}^1([0,a_{\dagger}))\text{, }\quad\mu_0(a)\geq 0 \quad \text{a.e. in } [0,a_\dagger]\text{, }\int_0^{a_\dagger}\mu_0(\sigma)d\sigma=+\infty
\end{equation}
and $\mathscr{M}(\cdot,x)$ is a continuous operator that for each $x\in\mathbb{R}_+=
\{x\in\mathbb{R}:x\geq 0\}$, gives a function in $L^1(0,a_\dagger)$, that is
$$\mathscr{M}(\cdot,x)\in C(\mathbb{R}_+,L^1(0,a_\dagger)).$$
We also assume that
\begin{align}
\mathscr{M}(a,x)&\geq 0\quad \text{a.e. in } [0,a_\dagger]\times\mathbb{R}_+ \\
\intertext{and }
\mathscr{M}(a,0)&=0\quad \text{a.e. in }[0,a_\dagger].\nonumber
\end{align}

\item\label{H'2}
The function $\beta$ satisfies
\begin{align}
& \beta(\cdot,x)\in C(\mathbb{R}_+,L^\infty(0,a_\dagger))\quad\text{with}
\\
& 0\leq \beta(a,x)\leq \beta_+\quad\text{a.e. in}\quad [0,a_\dagger]\times \mathbb{R}_+.
\end{align}
In addition we assume that $\beta(a,x)$ and $\mu(a,x)$ are Lipschitz continuous with respect to the second argument on bounded sets, uniformly on $a\in [0,a_\dagger]$. That is, for all $M>0$ there exists a constant $H(M)>0$ such that, if $x, \bar{x}\in [0,M]$, then
\begin{align}
|\mu(a,x)-\mu(a,\bar{x})| &\leq H(M)|x-\bar{x}|,\label{loclipmu} \\
|\beta(a,x)-\beta(a,\bar{x})| &\leq H(M)|x-\bar{x}|.\label{loclip}
\end{align}

\item\label{H'3} The weight functions are assumed to be non-negative and belong to $L^\infty(0,a_\dagger)$
\begin{equation*}
p,q\in L^\infty(0,a_\dagger)\text{,}\quad 0\leq p(a)\leq ||p||_\infty\quad\text{and}\quad 0\leq q(a)\leq q_+\quad \text{a.e. in}\quad [0,a_\dagger].
\end{equation*}

\item\label{fint} The initial distribution $f$ satisfies
\begin{equation*}
f\in L^1(0,a_\dagger),\; g(a)\geq 0\quad\text{ a.e. in }[0,a_\dagger].
\end{equation*}
\end{enumerate}

These assumptions can be found in \cite{Iannelli}. In order to study behavior of a population for large $t$, some additional properties of the birth rate $\beta$ and the weight function $p$ are needed. Namely, we suppose that there exist constants $a_2>b_2>b_1>a_1>0$ and $\delta>0$ such that
\begin{align}
\beta(a,x)&=0 \quad a\notin (a_1,a_2),
\\
\beta(a,x)&>\delta \quad \text{for }a\in(b_1,b_2),\label{beta}
\end{align}
and that there exist $p_2>p_1>0$ such that
\begin{equation}
p(a)>\delta\quad \text{for all }a\in [p_1,p_2].
\end{equation}



We begin our analysis by deriving an integral formulation to the model (\ref{gen1})--(\ref{genic1}). Our results are based on the reduction of the initial-boundary problem to the system of nonlinear integral equations for the number of newborns, denoted by
\begin{align}\label{rho0}
\rho(t)=n(0,t), \quad t>0,
\end{align}
and for the functions $P(t)$ and $Q(t)$.

As stated in Section 5.1 of \cite{Iannelli}, using the change of variables $a=x$ and $t=x+y$ and integrating along characteristic lines $y=C$, where $C$ is a constant, the balance equation (\ref{gen1}) becomes
\begin{equation}\label{n1}
   n(a,t) =\left\{\begin{array}{ll}
         \rho(t-a)e^{-\int_0^a\mu(v,P(v+t-a))dv}, & a<t,  \medskip \\
         f(a-t)e^{-\int_{a-t}^a\mu(v,P(v+t-a))dv}, & a\geq t.
        \end{array}\right.
\end{equation}
From (\ref{genbc1}), (\ref{rho0}) and (\ref{n1}) we obtain the system of integral equations:
\begin{equation}
\begin{aligned}
\rho(t) &=\int_0^t\beta(a,Q(t))\rho(t-a)e^{-\int_0^a\mu(v,P(v+t-a))dv}\,da \label{rho}\\
&\quad + \int_t^{\infty}\beta(a,Q(t))f(a-t)e^{-\int_{a-t}^a\mu(v,P(v+t-a))dv}\, da,
\end{aligned}
\end{equation}
\begin{equation}
\begin{aligned}
P(t) &= \int_0^t p(a)\rho(t-a)e^{-\int_0^a\mu(v,P(v+t-a))dv}\,da \\
&\quad + \int_t^{\infty}p(a)f(a-t)e^{-\int_{a-t}^a\mu(v,P(v+t-a))dv}\, da,\label{P0}
\end{aligned}
\end{equation}
and
\begin{equation}\label{Q0}
\begin{aligned}
Q(t) &= \int_0^t q(a)\rho(t-a)e^{-\int_0^a\mu(v,P(v+t-a))dv}\,da \\
&\quad + \int_t^{\infty}q(a)f(a-t)e^{-\int_{a-t}^a\mu(v,P(v+t-a))dv}\, da.
\end{aligned}
\end{equation}

Then we have   existence and uniqueness of a solution to the problem (\ref{rho})-(\ref{Q0}). More precisely, we have

\begin{theorem}\label{uniqeness}
Let assumptions \ref{H'1}--\ref{fint} hold. Then there exist unique non-negative functions ${\rho,P,Q\in C(\mathbb{R}_+)}$ satisfying problem (\ref{rho})-(\ref{Q0}).
\end{theorem}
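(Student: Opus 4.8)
The plan is to set up a standard Picard-type fixed point argument on the system \eqref{rho}--\eqref{Q0}, first on a short time interval and then continue the solution globally using an a priori bound. First I would fix $T>0$ and work in the Banach space $X_T = C([0,T];\mathbb{R}^3)$ with the supremum norm, restricting to the closed convex cone $K_T$ of triples $(\rho,P,Q)$ with all components non-negative and bounded by a constant $R$ to be chosen. On $K_T$ one defines the operator $\mathcal{T}(\rho,P,Q)=(\rho_1,P_1,Q_1)$ by letting $\rho_1$, $P_1$, $Q_1$ be the right-hand sides of \eqref{rho}, \eqref{P0}, \eqref{Q0} respectively. One must first check that $\mathcal{T}$ maps $K_T$ into $C([0,T];\mathbb{R}^3)$: continuity in $t$ follows from continuity of $\beta(\cdot,x)$ and $\mathscr{M}(\cdot,x)$ in $x$ together with dominated convergence, using that $\beta$ is bounded by $\beta_+$ (hypothesis \ref{H'2}), $p,q$ are bounded (hypothesis \ref{H'3}), $f\in L^1$ (hypothesis \ref{fint}), and the exponential kernels are bounded by $1$ because $\mu\ge 0$. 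Non-negativity of the image is immediate since every ingredient in \eqref{rho}--\eqref{Q0} is non-negative.

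Next I would establish a contraction estimate. Given two triples in $K_T$, the difference of the images is controlled by two sources of variation: the difference in the $\beta(a,Q(t))$, $\beta(a,\bar Q(t))$ (and analogous $p,q$) prefactors, handled by the local Lipschitz bound \eqref{loclip} with constant $H(R)$; and the difference in the exponential kernels $e^{-\int_0^a\mu(v,P(v+t-a))\,dv}$ versus the barred version. For the latter, using $|e^{-u}-e^{-\bar u}|\le |u-\bar u|$ for $u,\bar u\ge 0$, the discrepancy is bounded by $\int_0^a|\mu(v,P(v+t-a))-\mu(v,\bar P(v+t-a))|\,dv\le a_\dagger H(R)\|P-\bar P\|_\infty$ via \eqref{loclipmu}. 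Collecting terms, one gets $\|\mathcal{T}(w)-\mathcal{T}(\bar w)\|_{X_T}\le C(R)\,T\,\|w-\bar w\|_{X_T}$ for a constant $C(R)$ depending only on $R$, $\beta_+$, $\|p\|_\infty$, $q_+$, $a_\dagger$ and $H(R)$. Hence for $T$ small enough $\mathcal{T}$ is a contraction on $K_T$ — provided $K_T$ is invariant, which forces $R$ to be chosen larger than the size of the free (initial-data) terms; since the integrals over $(0,t)$ of the $\rho$-contributions are bounded by $\beta_+ R t$ etc., shrinking $T$ further makes $K_T$ invariant. The Banach fixed point theorem then yields a unique solution on $[0,T]$.

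For global existence I would iterate this construction. The only obstruction to extending a solution defined on $[0,T_0]$ past $T_0$ is blow-up of $\|(\rho,P,Q)\|_\infty$; but the length of the existence interval produced above depends on $R$ only through $C(R)$, so it suffices to have an a priori bound on solutions. Here $\rho(t)\le \beta_+\bigl(\int_0^t \rho(t-a)\,da + \|f\|_{L^1}\bigr)$, and Gronwall's inequality gives $\rho(t)\le \beta_+\|f\|_{L^1}e^{\beta_+ t}$, hence $P,Q$ are bounded on any finite interval by hypothesis \ref{H'3}; this locally uniform bound lets the local solution be continued to all of $\mathbb{R}_+$, and uniqueness on overlaps glues the pieces into a single solution in $C(\mathbb{R}_+)$. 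The main technical obstacle is the continuity-in-$t$ verification for $\mathcal{T}$: the kernels involve $\mu$, which is merely $L^1_{loc}$ near $a_\dagger$ with a divergent integral, and the shift $P(v+t-a)$ couples the $t$-dependence into the argument of $\mathscr{M}$; one handles this by splitting the $a$-integrals at a point below $a_\dagger$, using $\mathscr{M}\ge 0$ to bound the tail's exponential by $e^{-\int_0^a\mu_0}$ which is integrable against $\beta_+$ and $f$, and invoking the $C(\mathbb{R}_+,L^1(0,a_\dagger))$ regularity of $\mathscr{M}$ for equicontinuity of the bulk term. Everything else is routine once the function spaces and the cone $K_T$ are fixed.
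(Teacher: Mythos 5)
The paper itself offers no proof of Theorem \ref{uniqeness}: it simply cites Gurtin--MacCamy, Chipot, and Section 5.1 of Iannelli--Milner, so your sketch has to stand on its own. Your overall plan (local fixed point on a bounded cone, then an a priori bound and continuation) is the standard and correct strategy, but the central estimate as you state it is wrong: the bound $\|\mathcal{T}(w)-\mathcal{T}(\bar w)\|_{X_T}\le C(R)\,T\,\|w-\bar w\|_{X_T}$ does not hold, because the tail term in \eqref{rho}, namely $\int_t^{\infty}\beta(a,Q(t))f(a-t)e^{-\int_{a-t}^a\mu(v,P(v+t-a))\,dv}\,da$, is not a ``free (initial-data) term'': it depends on the unknown $Q(t)$ through $\beta(a,Q(t))$. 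By \eqref{loclip} its Lipschitz constant in $Q$ is of order $H(R)\|f\|_{L^1}$, with no factor of $T$ --- in contrast to its $P$-dependence, which enters only through an exponent integrated over an interval of length $t$, and in contrast to the tails of \eqref{P0}--\eqref{Q0}, whose weights $p,q$ do not involve $Q$ at all. Consequently $\mathcal{T}$ need not be a contraction on $K_T$ for small $T$ unless $\|f\|_{L^1}$ happens to be small, and the choice of $T$ on which you base both invariance of $K_T$ and Banach's theorem is unjustified.

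The gap is repairable by a standard modification, and you should state one explicitly. Either note that the $P_1,Q_1$ components of $\mathcal{T}$ are Lipschitz in all three arguments with constant $O(T)$, so that the composition $\mathcal{T}^2$ (rather than $\mathcal{T}$) is a contraction for $T$ small, which is enough for the fixed point theorem; or, closer to the cited treatment of Iannelli--Milner, first solve the linear Volterra equation for $\rho$ with $(P,Q)$ regarded as given continuous data, and then run the contraction only on the pair $(P,Q)$, whose update map does gain a factor $T$. With either repair, the remaining parts of your outline (non-negativity from positivity of all kernels, the bound $\rho(t)\le\beta_+\|f\|_{L^1}e^{\beta_+t}$ via Gronwall, continuation to all of $\mathbb{R}_+$, and the splitting near $a_\dagger$ to handle the merely $L^1_{loc}$ factor $\mu_0$ together with the $C(\mathbb{R}_+,L^1)$ regularity of $\mathscr{M}$) go through; you should also add a line showing uniqueness among \emph{all} continuous non-negative solutions, not only those lying in the ball $K_T$, which follows from a Gronwall-type estimate once any such solution is seen to be bounded on compact time intervals.
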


This result is well-known and comes back to the analysis of age-structured population models in the papers Gurtin \& MacCamy \cite{Grt} and Chipot \cite{Cip}. The proof  (even for a more general model involving arbitrarily many weight functions) can be found, for example, in Section 5.1 of a recent monograph of Iannelli and Milner \cite{Iannelli}.

\section{Boundedness of solution}\label{boundedness}

The negative density-dependence is observed in biological populations as intraspecific competition or overcrowding effects, and investigated both practically and theoretically. Mathematical representation of the negative-density dependence begins with the Verhulst model for unstructured population, see for example \cite{Ian}, and the consequence of this type of regulation are bounded growth and stabilization of population around its carrying capacity. Effects of the negative density-dependence on the age-structured population are studied in \cite{we2}. Under the assumption that only members of the same age-class compete, the existence of a bounded solution has been proven. In what follows, we will prove the existence of a bounded solution considering more general mortality function which includes competition between different age classes. To this end we consider the problem (\ref{gen1})--(\ref{genic1}), where the non-negativity condition on $\mathcal{M}$ in \ref{H'1} is removed, and instead the following holds:
\begin{enumerate}[label=\textrm{($A_{\arabic*}$)},series=lafter]
\item\label{A2} There exist a function $\psi\in C(\mathbb{R}_+)$ such that \begin{equation*}
\mathcal{M}(a,x)\geq \psi(x)\geq -\sup \mu_0(a) \quad \text{for all }a\text{ and }x
\end{equation*}
where \begin{equation*}
\psi(\cdot) \text{ is non-decreasing,}\quad \lim\limits_{x\rightarrow \infty}\psi(x)=\infty.
\end{equation*}
\item\label{A1} There exists a constant $c>0$ such that $\beta(a)\le cp(a)$ for all $a$.
\end{enumerate}

Assumption \ref{A2} corresponds to the fact that for large populations mortality is increased by increase in population size and also generalizes mortality rate used in \cite{we2}. Note that for small populations this correlation does not need to hold. This allows us to include mortality functions that satisfy:
$\mu(a,P)$ is decreasing for $P\in(0,\delta)$ and increasing for $P>\delta$. These types of mortality functions can be related to the Allee effect to describe situations when, for small population sizes, increase in population size increases fitness by reducing mortality. Condition \ref{A2} implies that the density-dependent mortality rate is unbounded, which corresponds to our expectations since intraspecific competitions increases with population size.

Assumption \ref{A1} does not restrict birth rate $\beta$ or the weight function $p$, since $\beta$ is already bounded and $p$ is non-negative, according to \ref{H'2} and \ref{H'3}. However, it does provide a relation between individuals contribution to fecundity and mortality: individuals in every fertile age group are competing for resources and contributing to mortality rate of individuals of their age or older.

In what follows, we will show that the assumptions \ref{A2} and \ref{A1} are sufficient for boundedness of the functions $P(t)$, $Q(t)$ and $\rho(t)$ for all $t$. This improves the result in \cite{bound}, where the weight function $p(a)$ is supposed to be Lipschitz continuous. We begin by formulating the following lemma.


\begin{lemma}\label{lemma_boundedness}
Let $\rho$ be a non-negative continuous function on $[0,\infty)$ and let $\psi(x)$ satisfy  \ref{A2}. We define $\psi^{-1}(x)$ as \mbox{$\max\{y\ge0:\,\psi(y)=x\}$}. Let $\gamma=1-\psi(0)$. If there exist constants $c>0$ and $M> 1+\psi(\frac{\rho(0)}{c})$ such that
\begin{equation}\label{localbound}
\rho(t)\leq M\max\limits_{x\leq  t}\frac{\rho(x)}{\psi(\frac{\rho(x)}{c})+\gamma}\quad \text{for all } t,
\end{equation}
 then
 \begin{equation}\label{globalbound}
 \rho(t)\leq M\max\limits_{k\leq c\psi^{-1}(M-\gamma)}\frac{k}{\psi(\frac{k}{c})+\gamma}<\infty.
 \end{equation}
\end{lemma}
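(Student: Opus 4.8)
The plan is to rewrite the hypothesis through a single auxiliary function and then exploit a fixed-point property of that function together with a running-maximum trick. Set $G(k):=Mk/(\psi(k/c)+\gamma)$ for $k\ge0$. Since $\psi$ is non-decreasing, $\psi(k/c)+\gamma=\psi(k/c)-\psi(0)+1\ge1$, so $G$ is a well-defined non-negative continuous function with $G(0)=0$; the hypothesis (\ref{localbound}) then reads $\rho(t)\le\max_{x\le t}G(\rho(x))$ for all $t$, and the conclusion (\ref{globalbound}) reads $\rho(t)\le B:=\max_{0\le k\le K}G(k)$ with $K:=c\psi^{-1}(M-\gamma)$. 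First I would record that $K$ is a well-defined finite number (the assumption $M>1+\psi(\rho(0)/c)$ places $M-\gamma$ in the range of $\psi$ on $[0,\infty)$, and $\psi\to\infty$ makes the level set $\{y:\psi(y)=M-\gamma\}$ bounded), and hence that $B<\infty$, being the maximum of the continuous function $G$ on the compact interval $[0,K]$.

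Next I would isolate the two elementary properties of $G$ that carry the proof: (i) $G(K)=K$, because $\psi(K/c)=\psi(\psi^{-1}(M-\gamma))=M-\gamma$ gives $G(K)=MK/((M-\gamma)+\gamma)=K$; and (ii) $G(k)<k$ for every $k>K$, because $k/c>\psi^{-1}(M-\gamma)$ forces $\psi(k/c)\ge M-\gamma$ by monotonicity, with equality ruled out by the maximality in the definition of $\psi^{-1}$, so that $\psi(k/c)+\gamma>M$ and therefore $G(k)<k$. In words: the graph of $G$ meets the diagonal at $K$ and stays strictly below it afterwards.

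Then I would introduce the running maximum $\rho^{*}(t):=\max_{0\le x\le t}\rho(x)$, which is continuous and non-decreasing, together with $\Phi(s):=\max_{0\le k\le s}G(k)$, also continuous and non-decreasing. For $x\le s\le t$ one has $\rho(x)\le\rho^{*}(t)$, hence $G(\rho(x))\le\Phi(\rho^{*}(t))$; substituting into (\ref{localbound}) at each time $s\le t$ and maximising over such $s$ yields the self-contained inequality $\rho^{*}(t)\le\Phi(\rho^{*}(t))$ for all $t$. To close the argument I would show this forces $\rho^{*}(t)\le B$: if $s:=\rho^{*}(t)>B$, then $s>K$ since $B\ge G(K)=K$, and the maximum defining $\Phi(s)$ is attained at some $k_0\in[0,s]$; if $k_0\le K$ then $\Phi(s)=G(k_0)\le B<s\le\Phi(s)$, whereas if $k_0>K$ then by (ii) $\Phi(s)=G(k_0)<k_0\le s\le\Phi(s)$ --- both impossible. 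Hence $\rho(t)\le\rho^{*}(t)\le B$ for every $t$, which is exactly (\ref{globalbound}).

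\textbf{Where the difficulty lies.} The only genuinely delicate point is that (\ref{localbound}) is self-referential --- its right-hand side still contains the unknown $\rho$ on all of $[0,t]$ --- so it is not a priori an estimate at all. The device that resolves this is the running-maximum reformulation $\rho^{*}\le\Phi(\rho^{*})$ coupled with the fact that $G$ falls strictly below the diagonal beyond its fixed point $K$; this pins $\rho^{*}$ inside the sublevel region $\{s:\Phi(s)\le s\}$, on which $\Phi$, and hence $\rho^{*}$, never exceeds $\max_{[0,K]}G$. The remaining ingredients --- continuity and monotonicity of $\rho^{*}$ and $\Phi$, finiteness of $B$, and the properties (i)--(ii) --- are routine once the monotonicity of $\psi$ and the precise meaning of $\psi^{-1}$ are invoked.
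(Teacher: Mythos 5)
Your proof is correct, and it ultimately rests on the same elementary mechanism as the paper's proof: once $\rho(x)>c\psi^{-1}(M-\gamma)$, the maximality built into the definition of $\psi^{-1}$ forces $\psi(\rho(x)/c)+\gamma>M$, so that $M\rho(x)/\bigl(\psi(\rho(x)/c)+\gamma\bigr)<\rho(x)$. The packaging, however, is different. The paper argues by contradiction at an extremal time: it assumes the quotient $\rho(x)/\bigl(\psi(\rho(x)/c)+\gamma\bigr)$ ever exceeds the maximum appearing in (\ref{globalbound}) (divided by $M$), picks a time $t_1$ where this quotient attains its maximum over the initial interval, deduces $\rho(t_1)>c\psi^{-1}(M-\gamma)$ and hence $\rho(t_1)\le M\rho(t_1)/\bigl(\psi(\rho(t_1)/c)+\gamma\bigr)<\rho(t_1)$; the hypothesis $M>1+\psi(\rho(0)/c)$ is used there to guarantee the extremal time is positive. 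You instead recast (\ref{localbound}) as $\rho^*(t)\le\Phi(\rho^*(t))$ for the running maximum of $\rho$ and exploit the fixed-point picture $G(K)=K$ and $G(k)<k$ for $k>K$, which traps $\rho^*$ below $B=\max_{[0,K]}G$; this is a clean, self-contained alternative that avoids selecting a maximizer of the quotient and makes transparent that the condition on $M$ enters only through the meaningfulness of $c\psi^{-1}(M-\gamma)$ rather than through the estimate itself. One small caveat: your parenthetical claim that $M>1+\psi(\rho(0)/c)$ by itself places $M-\gamma$ in the range of $\psi$ on $[0,\infty)$ really requires $\psi(\rho(0)/c)\ge 0$ (equivalently $M\ge 1$, i.e.\ $M-\gamma\ge\psi(0)$); when $\psi$ takes negative values near $0$ this is not automatic. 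Since $\psi^{-1}(M-\gamma)$ already appears in the statement of the lemma, its existence is an implicit hypothesis there as well (the paper makes the same tacit assumption), so this is a remark about the formulation rather than a gap in your argument.
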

Proof of this lemma can be found in the Appendix~\ref{sec:appLem31}.

We now state and prove the main result of this section.

\begin{theorem}\label{theoremboundedness}
If the functions $\beta$, $\mu$, $f$, $p$ and $q$ satisfy \ref{H'2}--\ref{fint} with the additional assumptions $\ref{A2}$ and $\ref{A1}$, then the functions $\rho$, $P$ and $Q$ are bounded.
\end{theorem}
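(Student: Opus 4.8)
The plan is to deduce everything from Lemma~\ref{lemma_boundedness} applied to the newborn function $\rho$, using for $c$ precisely the constant supplied by \ref{A1}, and setting $\gamma=1-\psi(0)$ (finite, since $\psi\in C(\mathbb{R}_+)$). The first observation is that it suffices to bound $\rho$: in the representations (\ref{P0}) and (\ref{Q0}) the survival factor $e^{-\int_0^a\mu(v,P(v+t-a))\,dv}$ is at most $e^{-\int_0^a\mu_0(v)\,dv}e^{-a\psi(0)}$ by \ref{A2} and monotonicity of $\psi$, and since $\int_0^{a_\dagger}\mu_0=+\infty$ this factor vanishes for $a\ge a_\dagger$ and is bounded on $[0,a_\dagger]$; with $p,q\in L^\infty(0,a_\dagger)$ and $f\in L^1(0,a_\dagger)$ one then obtains $P(t),Q(t)\le\mathrm{const}\cdot(1+\sup_s\rho(s))$. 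So the whole content is the boundedness of $\rho$.

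I would start by showing $\rho(t)\le cP(t)$ for all $t\ge0$. This follows at once by comparing (\ref{rho}) with (\ref{P0}): by \ref{A1} one may replace $\beta(a,Q(t))$ by $c\,p(a)$ in (\ref{rho}), and since $\rho,f\ge0$ and the two representations carry exactly the same exponential weights, the right-hand side of (\ref{rho}) is at most $c$ times that of (\ref{P0}). In particular $\psi(P(u))\ge\psi(\rho(u)/c)$ for every $u$, by monotonicity of $\psi$.

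Next I would feed this into (\ref{P0}) to produce the hypothesis (\ref{localbound}) of the lemma. By \ref{A2} and the previous step, $\mu(v,P(v+t-a))\ge\mu_0(v)+\psi(\rho(v+t-a)/c)$, so after substituting $u=v+t-a$ the survival factor in the principal term of (\ref{P0}) is at most $e^{-\int_0^a\mu_0}e^{-\int_{t-a}^t\psi(\rho(u)/c)\,du}$; changing variables $s=t-a$ and writing $\phi(u):=\psi(\rho(u)/c)$, this term is $\le\|p\|_\infty\int_{(t-a_\dagger)^+}^t\rho(s)\,e^{-\int_s^t\phi(u)\,du}\,ds$, while the $f$-term of (\ref{P0}) is $\le C_1$ for a constant $C_1$ depending only on the data (again using $\mu\ge\mu_0+\psi(0)$, $p$ supported in $[0,a_\dagger]$, $f\in L^1$). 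Since $\phi(s)+\gamma\ge\psi(0)+\gamma=1>0$, I would write $\rho(s)=\frac{\rho(s)}{\phi(s)+\gamma}(\phi(s)+\gamma)\le m(t)\,(\phi(s)+\gamma)$ with $m(t):=\max_{y\le t}\frac{\rho(y)}{\psi(\rho(y)/c)+\gamma}$, and use $\frac{d}{ds}e^{-\int_s^t\phi(u)\,du}=\phi(s)e^{-\int_s^t\phi(u)\,du}$ to evaluate $\int_{(t-a_\dagger)^+}^t(\phi(s)+\gamma)e^{-\int_s^t\phi(u)\,du}\,ds=\bigl(1-e^{-\int_{(t-a_\dagger)^+}^t\phi}\bigr)+\gamma\int_{(t-a_\dagger)^+}^t e^{-\int_s^t\phi(u)\,du}\,ds$, which is bounded by a data-constant (the bracket is $\le1$; in the second term $\phi\ge\psi(0)$ and the interval has length $\le a_\dagger$). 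This gives $P(t)\le K\,m(t)+C_1$ with $K$ a data-constant, hence $\rho(t)\le cP(t)\le cK\,m(t)+cC_1$.

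Finally, to absorb the additive constant: if $\rho\equiv0$ there is nothing to prove, and otherwise — restarting the system at a time where $\rho>0$ (legitimate by uniqueness, Theorem~\ref{uniqeness}; on the preceding compact interval $\rho$ is bounded by continuity) — I may assume $\rho(0)>0$, so $m(t)\ge\rho(0)/(\psi(\rho(0)/c)+\gamma)>0$ and $cC_1\le\mathrm{const}\cdot m(t)$. Thus $\rho(t)\le M\,m(t)$ for all $t$, with $M$ the resulting constant, which I may freely enlarge so that $M>1+\psi(\rho(0)/c)$ (enlarging $M$ preserves the inequality since $m\ge0$). This is exactly (\ref{localbound}), so Lemma~\ref{lemma_boundedness} gives (\ref{globalbound}), i.e. $\rho$ is bounded, and then $P$ and $Q$ are bounded by the first paragraph. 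I expect the main obstacle to be the bookkeeping of the third paragraph — extracting the factor $m(t)$ from the integral that is nonlinear in the exponent and checking that what remains is a genuine constant rather than something that grows with $\rho$ — together with making sure that the case $\psi(0)<0$ permitted by \ref{A2} (an Allee-type drop in mortality at small densities) does not make the exponential weights explode, which is rescued precisely by the finiteness of the maximal age $a_\dagger$.
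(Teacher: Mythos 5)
Your proof is correct and follows essentially the same route as the paper: deduce $\rho\le cP$ from \ref{A1}, use \ref{A2} and the monotonicity of $\psi$ to replace $\psi(P)$ by $\psi(\rho/c)$ in the exponent, extract $\max_{x\le t}\rho(x)/(\psi(\rho(x)/c)+\gamma)$ by the multiply-and-divide trick with exact integration of the exponential, apply Lemma~\ref{lemma_boundedness}, and then bound $P$ and $Q$ from the boundedness of $\rho$. The only (harmless) deviations are that you run the estimate through the $P$-equation with the $f$-terms retained, absorbing the resulting additive constant via positivity of the maximum after a restart, whereas the paper estimates $\rho$ directly from \eqref{pt} for $t\ge a_\dagger$ where the $f$-terms are absent.
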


\begin{proof}
Using the variable changes $x=t-a$ and $v_{new}=v_{old}+x$ in the first integrals of (\ref{rho}) and (\ref{P0}), and assuming that $t\geq a_\dagger$, we obtain
\begin{align}
\label{pt}\rho(t) &=\int_0^t\beta(t-x,Q(t))\rho(x)e^{-\int_x^{t}\mu
(v-x,P(v))dv}\,dx, \\
\label{Pt} P(t)&=\int_{0}^t p(t-x)\rho(x)e^{-\int_x^{t}\mu(v-x,P(v))dv}\,dx,\\
\label{Qt} Q(t)&=\int_{0}^t q(t-x)\rho(x)e^{-\int_x^{t}\mu(v-x,P(v))dv}\,dx.
\end{align}
This together with assumption $\ref{A1}$ implies that
\begin{align}
P(t) &=\int_0^tp(t-x)\rho(x)e^{-\int_x^{t}\mu(v-x,P(v))dv}\,dx \nonumber  \\
& \ge \frac{1}{c}\int_0^t \beta(t-x,Q(t))\rho(x)e^{-\int_x^{t}\mu(v-x,P(v))dv}\,dx \nonumber \\
&\ge \frac{1}{c} \rho(t).\label{boundedbypopulation}
\end{align}
Using $\ref{A2}$, $\ref{A1}$ and  (\ref{boundedbypopulation}), from equation  (\ref{pt}) follows an estimate of $\rho$:
\begin{align*}
\rho(t)&\leq \int_{t-a_\dagger}^t\beta_{max}\rho(x)e^{-\int_x^{t}\psi(P(v))dv}\,dx\\
&\leq \int_{t-a_\dagger}^t\beta_{max}\rho(x)e^{-\int_x^{t}\psi(\frac{\rho(v)}{c})dv}\,dx.
\end{align*}
Multiplying  both the nominator and the denominator with $\psi(\frac{\rho(x)}{c})+\gamma>0$ we get.
\begin{align*}
p(t)&\leq \int_{t-a_\dagger}^t \beta_{max}\frac{\rho(x)}{\psi(\frac{\rho(x)}{c})+\gamma}(\psi(\frac{\rho(x)}{c})+\gamma)e^{-\int_x^{t}\psi(\frac{\rho(v)}{c})dv}\,dx\\
&\leq \beta_{max}\max\limits_{x<t}\frac{\rho(x)}{\psi\left(\frac{\rho(x)}{c}\right)+\gamma}\left(\int_{t-a_\dagger}^t\psi\left(\frac{\rho(x)}{c}\right)e^{-\int_x^{t}\psi\left(\frac{\rho(v)}{c}\right)dv}dx\right.\\
&\quad+\left.\int_{t-a_\dagger}^t e^{-\int_0^{t-x}\psi(\frac{\rho(v)}{c})dv}\,dx\right)
\\
&\leq \beta_{max}\max\limits_{x<t}\frac{\rho(x)}{\psi\left(\frac{\rho(x)}{c}\right)+\gamma}\left(\left[e^{-\int_x^t \psi(\frac{\rho(v)}{c})dv}\right]^{t}_{t-a_\dagger}+\int_{t-a_\dagger}^t \gamma\,dx\right)
\\
&\leq \beta_{max}(\gamma+a_\dagger)\max\limits_{x\leq t}\frac{\rho(x)}{\psi(\frac{\rho(x)}{c})+\gamma}.
\end{align*}
Lemma \ref{lemma_boundedness} infers that $\rho$ is bounded by
\begin{equation}\label{rho+}
M\max\limits_{k\leq c\psi^{-1}(M-\gamma)}\frac{k}{\psi(\frac{k}{c})+\gamma},
\end{equation}
where
\begin{equation*}
M=\max\left(\beta_{max}(\gamma+a_\dagger),\gamma+\psi(\frac{\rho(0)}{c})\right).
\end{equation*}
Finally, to prove that $P$ and $Q$ are bounded, it is sufficient to use boundedness of $\rho$ and $(\ref{P0})$
\end{proof}

\section{Local stability of the trivial equilibrium} \label{stability}
In order to investigate the local stability of the trivial equilibrium $(\rho,P,Q)=(0,0,0)$, problem (\ref{p})--(\ref{gen1}) is linearized. Let $(\rho,P,Q)=(z,\mathcal{P},\mathcal{Q})$ be a solution to (\ref{p})--(\ref{gen1}) and assume $(z(a,t),\mathcal{P}(t),\mathcal{Q}(t))$ is close to zero. In order to linearize
, we assume, in addition to the previous assumptions on $\beta$ and $\mu$, that $\beta(a,x)$ and $\mu(a,x)$ have continuous partial derivatives with respect to the second argument, uniformly in $a\in[0,a_\dagger]$. By linearization around zero we get
\begin{align}
\frac{\partial z(a,t)}{\partial t}+\frac{\partial z(a,t)}{\partial a}&=-\mu(a,0)z(a,t),\label{stablebalance0}
\\
z(0,t)&=\int_0^\infty \beta(a,0)z(a,t)\,da.\label{stablerenewal0}
\end{align}
If $z$ is known, $\mathcal{P}$ and $\mathcal{Q}$ can be calculated from formulas (\ref{p})-(\ref{q}).

In the age-structured population models, the net reproduction rate defined by
 \begin{equation}\label{R_00}
  R_0=\int_0^\infty\beta(a,0)e^{-\int_0^a \mu(\tau,0)d\tau}\,da
  \end{equation}
measures the number of offspring of an individual during its lifetime \cite{we1,we2}. It is often used as an indicator of the large time population behavior and a dichotomy between population survival for $R_0>1$ and extinction for $R_0\le 1$ has been proven in \cite{we1,we2}. Stability of the trivial equilibrium $(\rho,P,Q)=(0,0,0)$ of linear problem  (\ref{stablebalance0})--(\ref{stablerenewal0}) can be assessed using $R_0$ and we have the following result.

\begin{proposition}
  If $R_0<1$, then the solution of (\ref{stablebalance0})--(\ref{stablerenewal0}) converge to zero, and if $R_0> 1$, it increases to infinity. If $R_0=1$ then the solution is bounded and persistent. \end{proposition}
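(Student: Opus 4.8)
The plan is to reduce the linear system \eqref{stablebalance0}--\eqref{stablerenewal0} to a scalar renewal equation for the newborn rate and to read off the asymptotics from its characteristic function. Write $B(t):=z(0,t)$ and $\Pi(a):=\exp\!\big(-\int_0^a\mu(\tau,0)\,d\tau\big)$. Integrating \eqref{stablebalance0} along characteristics exactly as in \eqref{n1} gives $z(a,t)=B(t-a)\Pi(a)$ for $a<t$ and $z(a,t)=g(a-t)\Pi(a)/\Pi(a-t)$ for $a\ge t$; substituting this into \eqref{stablerenewal0} produces the linear renewal equation
\begin{equation*}
B(t)=\int_0^t K(a)B(t-a)\,da+F(t),\qquad K(a):=\beta(a,0)\Pi(a),
\end{equation*}
with $F(t):=\int_t^\infty\beta(a,0)g(a-t)\Pi(a)/\Pi(a-t)\,da$. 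By \ref{H'1}--\ref{H'3} and \eqref{beta}, $K\ge0$ is bounded and supported in $(a_1,a_2)\subset(0,a_\dagger)$ with $\int_0^\infty K=R_0$, while $F\ge0$ is bounded and vanishes for $t\ge a_2$; one may assume $g\not\equiv0$, so $F\not\equiv0$ (otherwise the statement is trivial).

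Next I would identify the Malthusian parameter. Put $\widehat K(\lambda):=\int_0^\infty e^{-\lambda a}K(a)\,da$; since $K$ has compact support, $\widehat K$ is finite and real-analytic on $\mathbb{R}$, strictly decreasing, with $\widehat K(0)=R_0$, $\widehat K(\lambda)\to 0$ as $\lambda\to+\infty$ and $\widehat K(\lambda)\to+\infty$ as $\lambda\to-\infty$. Hence there is a unique real $\lambda^*$ with $\widehat K(\lambda^*)=1$; it satisfies $\operatorname{sign}\lambda^*=\operatorname{sign}(R_0-1)$ and $\lambda^*=0\iff R_0=1$; it is a simple zero of $1-\widehat K$ because $\widehat K'(\lambda^*)<0$; and since $|\widehat K(\lambda)|\le\widehat K(\Re\lambda)<1$ whenever $\Re\lambda>\lambda^*$, the function $1-\widehat K$ has no further zeros in that half-plane.

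Then I would extract the asymptotics of $B$. Setting $b(t):=e^{-\lambda^* t}B(t)$, $h(t):=e^{-\lambda^* t}F(t)$, $\kappa(a):=e^{-\lambda^* a}K(a)$ turns the equation into the proper renewal equation $b=\kappa*b+h$ with $\kappa\ge0$, $\int\kappa=1$, $\kappa$ compactly supported (finite mean $m\in(0,\infty)$, vanishing on $[0,a_1)$), and $h\ge0$, $h\in L^1$, $\int h>0$. The renewal density $u$ associated with $\kappa$ is bounded on $[0,\infty)$ and, by Blackwell's theorem, $u(t)\to 1/m$ — here the structural hypothesis \eqref{beta} enters, since $\beta(\cdot,0)>\delta$ on $(b_1,b_2)$ makes $\kappa$ non-lattice. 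Writing $b=h+u*h$ and passing to the limit by dominated convergence gives $b(t)\to b_\infty:=m^{-1}\int_0^\infty h(s)\,ds\in(0,\infty)$, i.e.\ $B(t)=e^{\lambda^* t}\big(b_\infty+o(1)\big)$. Finally $z(a,t)=B(t-a)\Pi(a)$ with $\Pi\le1$ transports this estimate to the whole age profile (and, via $z(0,t)=B(t)$, also from below), and \eqref{p}--\eqref{q} together with the positivity of $p$ on $[p_1,p_2]$ carry it to $\mathcal{P},\mathcal{Q}$. Thus $R_0<1\Rightarrow\lambda^*<0\Rightarrow z,\mathcal{P},\mathcal{Q}\to0$; $R_0>1\Rightarrow\lambda^*>0\Rightarrow z(0,t)\to\infty$; and $R_0=1\Rightarrow\lambda^*=0\Rightarrow B(t)\to b_\infty\in(0,\infty)$, so the solution is bounded and persistent.

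The only delicate point is the third step: the reduction to a renewal equation and the elementary properties of $\widehat K$ are routine, but making $B(t)\sim b_\infty e^{\lambda^* t}$ rigorous requires the non-lattice property of $\kappa$ (hence of $\beta(\cdot,0)$ on a full interval, which is exactly what \eqref{beta} supplies) together with boundedness of the renewal density $u$; both are needed in particular for the "bounded and persistent" conclusion at $R_0=1$, where a strictly positive lower bound on $B$ is essential. An alternative to invoking the renewal theorem is a Laplace-transform/Tauberian argument: $\widetilde B=\widetilde F/(1-\widehat K)$ has its only singularity in $\{\Re\lambda\ge\lambda^*\}$ at the simple pole $\lambda^*$, and shifting the inversion contour slightly to the left of $\lambda^*$ (using \eqref{beta} again to rule out a vertical line of poles with real part $\lambda^*$) yields the same leading term.
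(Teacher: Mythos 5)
Your argument is correct in substance, but it reaches the conclusion by a genuinely different route than the paper. Both proofs hinge on the same Malthusian parameter: you define $\lambda^*$ as the unique real root of $\widehat K(\lambda)=1$, which is exactly the characteristic equation \eqref{lambda}, and in both cases $\operatorname{sign}\lambda^*=\operatorname{sign}(R_0-1)$. The difference is in how the time asymptotics of $z(0,t)$ are extracted. The paper does not pass through the renewal theorem at all: it simply cites Theorems 3.2 and 3.3 of \cite{we1} (two-sided estimates for linear age-structured dynamics, valid even with time-dependent vital rates) to get $C_1e^{\lambda t}\le z(0,t)\le C_2e^{\lambda t}$ as in \eqref{bounds}, from which all three cases follow at once; this is short and sidesteps any regularity or lattice issues. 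You instead derive the Lotka renewal equation $B=K*B+F$ and invoke Blackwell/key renewal theory (or the Laplace--Tauberian alternative) to get the sharper statement $z(0,t)\sim b_\infty e^{\lambda^* t}$ with an explicit limit constant --- a stronger conclusion, but one whose ``delicate step'' genuinely needs care here: with $\beta(\cdot,0)$ only in $L^\infty$, direct Riemann integrability of $e^{-\lambda^* t}F(t)$ and of the kernel is not automatic, so you should appeal to the spread-out/absolutely-continuous version of the renewal theorem (Stone's decomposition) rather than the bare key renewal theorem. One small slip: $g\not\equiv0$ does not imply $F\not\equiv0$ --- an initial distribution supported entirely on post-reproductive ages (beyond $a_2$) gives $F\equiv0$ and $z(0,t)\equiv0$, so the population dies out even if $R_0\ge1$; the nondegeneracy you need is precisely the paper's hypothesis $z(0,t)\not\equiv0$ accompanying \eqref{bounds}, and you should state it in that form rather than deduce it from $g\not\equiv0$.
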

\begin{proof}
Let $\lambda$ be such that
\begin{equation}\label{lambda}
\int_0^\infty \beta(a,0)e^{\int_0^a \mu(v,0)dv-\lambda a}da =1
\end{equation}
Observe that the left-hand side of (\ref{lambda}) is a strictly decreasing continuous function with respect to $\lambda$, with values ranging from $\infty$ to $0$. Thus, $\lambda$ is well defined. By Theorem 3.2 and Theorem 3.3 in \cite{we1}, for $\sigma=\lambda$ and $z(0,t)\neq 0$, there exist constants $C_1,C_2>0$ such that
\begin{equation}\label{bounds}
C_1e^{\lambda t}\leq z(0,t) \leq C_2 e^{\lambda t}.
\end{equation}
If $R_0<1$, then $\lambda<0$ and if $R_0>1$, then $\lambda>0$. This, together with (\ref{bounds}), implies the theorem.
\end{proof}

\begin{remark}
As a consequence of the "Principal of Linearised Stability" in \cite{Desch}, it follows that asymptotic stability and instability of the linearised problem (\ref{stablebalance0})-(\ref{stablerenewal0}) implies asymptotic stability and instability respectively for the non-linear problem (\ref{p})-(\ref{genic1}). This means that for our original problem (\ref{p})-(\ref{genic1}) we can conclude that the trivial equilibrium is locally stable if $R_0<1$ and locally unstable if $R_0>1$
\end{remark}

We will not go into the details of \cite{Desch}, but for guidance we note that (\ref{gen1})--(\ref{genic1}) defines a family of operators $T(t):L^1(0,a_\dagger)\rightarrow C(\mathbb{R})$ which takes in an initial distribution $f(a)$ and gives the solution of (\ref{gen1})--(\ref{genic1}) evaluated at $t$, that is $n(\cdot,t)$. This family, as it turns out, is a semigroup and the Fréchet derivative of $T(t)$ is the corresponding operator derived from the linear problem (\ref{stablebalance0})--(\ref{stablerenewal0}).


In the next section we improve on our recent results about local stability by deriving conditions for persistence of the solution and for global extinction.
\section{Global stability analysis}\label{largetimebehavior}

The net reproduction rate $R_0$ defined by (\ref{R_00}) can be used to determine the large time behaviour of the solution to the problem (\ref{gen1})-(\ref{genic1}). Our next theorem claims that the functions $\rho$, $Q$ and $P$ are separated from zero if the net reproduction rate is greater than one, and that the functions $\rho$, $Q$ and $P$ converge to zero otherwise.

\begin{theorem}\label{persistencetheorem}
Under the assumptions that
\begin{align}\label{definiteness}
\psi(P)>0, \quad \text{for all }P>0,
\\
\beta(a,0)>\beta(a,Q),\quad\text{for all }Q>0,
\end{align}
the following holds:

\begin{enumerate}
\item[a)] If $R_0 \leq 1$, then $\rho(t)\rightarrow 0$, $P(t)\rightarrow 0$ and $Q(t)\rightarrow 0$ as $t\rightarrow\infty$.
\item[b)] If $R_0>1$, then there exists positive constants $0<a_k<b_k$, $k=1,2,3$, independent of $f$ such that
\begin{align*}
a_1\le \rho(t) \le b_1 ,\quad
a_2\le P(t) \le b_2 \quad\mbox{and}\quad a_3\leq Q(t)\leq b_3\quad\mbox{for large $t$.}
\end{align*}
\end{enumerate}
\end{theorem}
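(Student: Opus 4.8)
The plan is to work throughout with the renewal identities (\ref{pt})--(\ref{Qt}): for $t>a_\dagger$ the terms containing the initial datum in (\ref{rho})--(\ref{Q0}) drop out, since $f$, $p$, $q$ and $\beta(\cdot,x)$ all vanish outside a fixed bounded age interval contained in $[0,a_\dagger]$. Boundedness of $\rho,P,Q$ is given by Theorem~\ref{theoremboundedness}, and after the transient $t>a_\dagger$ the bounds may be taken independent of $f$: the datum only enters the term that has disappeared, and since $\psi(P)\to\infty$ an initially large $\rho$ is pushed by the ensuing large mortality into a universal region, where the self-improving estimate behind Lemma~\ref{lemma_boundedness}, run with the $f$-free constant $M=\beta_{max}(\gamma+a_\dagger)$, keeps it. Write $K(a)=\beta(a,0)e^{-\int_0^a\mu_0(\sigma)\,d\sigma}$, so that $\int_0^{a_\dagger}K=R_0$ and $K(a)\ge\delta\,e^{-\int_0^a\mu_0(\sigma)\,d\sigma}>0$ for $a\in(b_1,b_2)$. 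Two monotonicity facts are used repeatedly: since $\mu$ is non-decreasing in $P$ here, $e^{-\int_0^a\mu(\sigma,P(\cdot))\,d\sigma}\le e^{-\int_0^a\mu_0(\sigma)\,d\sigma}$, with the strict improvement $e^{-\int_0^a\mu(\sigma,P(\cdot))\,d\sigma}\le e^{-a\psi(P_\ast)}e^{-\int_0^a\mu_0(\sigma)\,d\sigma}$ on any set where $P\ge P_\ast>0$ (using $\mathscr M(a,P)\ge\psi(P)\ge\psi(P_\ast)$); and $\beta(a,Q(t))\le\beta(a,0)$.

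For part (a) with $R_0<1$ the argument is short: for $t>a_\dagger$, (\ref{pt}) and the monotone estimate give $\rho(t)\le\int_0^{a_\dagger}K(a)\rho(t-a)\,da$, and inserting $\rho(t-a)\le m+\varepsilon$ for large $t$, where $m=\limsup_{t\to\infty}\rho(t)<\infty$, gives $m\le R_0 m$, hence $m=0$; then $P(t)\le\|p\|_\infty\int_0^{a_\dagger}\rho(t-a)\,da\to0$ by dominated convergence, and likewise $Q(t)\to0$. The boundary case $R_0=1$ is the first genuine difficulty. Suppose $m=\limsup\rho>0$ and pick $t_n\to\infty$ with $\rho(t_n)\to m$. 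Since $\int_0^{a_\dagger}K=1$ and $\rho(t_n-a)\le m+o(1)$, the inequality $\rho(t_n)\le\int_0^{a_\dagger}K(a)\rho(t_n-a)\,da$ forces $\rho(t_n-a)\to m$ for a.e.\ $a$ with $K(a)>0$; since $\rho$ is continuous and bounded above by $m+o(1)$, iterating (\ref{pt}) backwards along the resulting time windows produces intervals $J_n$ of length tending to infinity on which $\rho\ge m-o(1)$. On the middle of such a long interval, (\ref{Pt}) together with $p>\delta$ on $[p_1,p_2]$ gives $P(t)\ge\delta\,(m-o(1))\int_{p_1}^{p_2}e^{-\int_0^a\mu(\sigma,\bar P)\,d\sigma}\,da=:P_\ast>0$ (with $\bar P$ an upper bound for $P$), so mortality is strictly elevated there, and therefore $\rho(t)\le\theta\sup_{[t-a_\dagger,\,t]}\rho$ with $\theta:=\int_0^{a_\dagger}K(a)e^{-a\psi(P_\ast)}\,da<R_0=1$; iterating this across the long interval drives $\rho$ well below $m$ there, contradicting $\rho\ge m-o(1)$. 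Hence $m=0$, and $P(t),Q(t)\to0$ as before.

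For part (b), suppose $R_0>1$. The upper bounds $\rho(t)\le b_1$, $P(t)\le b_2$, $Q(t)\le b_3$ for large $t$ come from the $f$-independent form of Theorem~\ref{theoremboundedness} discussed above. The first step for the lower bounds is to show that $\rho$ cannot converge to $0$. By continuity of $\beta,\mu$ in the density variable and the Lipschitz bounds (\ref{loclipmu})--(\ref{loclip}), fix $\varepsilon>0$ and $\eta>0$ so that ``$P(s)\le\varepsilon$ for $s\in[t-a_\dagger,t]$ and $Q(t)\le\varepsilon$'' implies $\int_0^{a_\dagger}\beta(a,Q(t))e^{-\int_0^a\mu(\sigma,P(t-a+\sigma))\,d\sigma}\,da\ge1+\eta$ (this integral equals $R_0>1$ when $\varepsilon=0$). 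If $\rho(t)\to0$, then $P(t),Q(t)\to0$, so for $t\ge T$ the kernel $k_t(a):=\beta(a,Q(t))e^{-\int_0^a\mu(\sigma,P(t-a+\sigma))\,d\sigma}$ has total mass at least $1+\eta$, whence $\rho(t)=\int_0^{a_\dagger}k_t(a)\rho(t-a)\,da\ge(1+\eta)\min_{[t-a_\dagger,\,t]}\rho$; comparing the minimum of $\rho$ over consecutive intervals of length $a_\dagger$ then forces $\rho$ to grow geometrically, contradicting its boundedness. Hence $\limsup_{t\to\infty}\rho(t)>0$.

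The main obstacle in (b) is to upgrade this to uniform persistence, $\liminf_{t\to\infty}\rho(t)\ge a_1>0$ with $a_1$ independent of $f$. The mechanism is the dichotomy just isolated: whenever $P,Q$ are small $\rho$ grows at the uniform rate $1+\eta$, while $P(t)\le\|p\|_\infty a_\dagger\sup_{[t-a_\dagger,\,t]}\rho$ forces $\rho$ to climb above a fixed level before $P$ can exceed $\varepsilon$; conversely, once $\rho\ge c>0$ on an interval of length $\ge a_\dagger$, (\ref{Pt}) with $p>\delta$ on $[p_1,p_2]$ gives $P\ge P_\ast(c)>0$ there, and then mortality is elevated. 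Combined with the fact that the trivial equilibrium is the unique invariant set on the extinction boundary $\{\rho\equiv0\}$ and is unstable for $R_0>1$ (Section~\ref{stability}), this places us in the setting of the standard acyclicity theory of uniform persistence for dissipative semiflows, which upgrades the already-established $\limsup_{t\to\infty}\rho>0$ (in its uniform form, quantified by the dichotomy) to $\liminf_{t\to\infty}\rho\ge a_1>0$; a self-contained proof can alternatively be assembled from the dichotomy by tracking an excursion of $\rho$ towards $0$ and bounding its minimum from below. Finally, once $a_1\le\rho(t)$ for all large $t$, (\ref{Pt}) with $p>\delta$ on $[p_1,p_2]$ yields $a_2\le P(t)$ and (\ref{Qt}) yields $a_3\le Q(t)$ analogously, which completes the proof.
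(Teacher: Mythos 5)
The decisive gap is in part (b). You establish only weak persistence ($\limsup_{t\to\infty}\rho(t)>0$) and then delegate the actual claim --- $\liminf_{t\to\infty}\rho(t)\ge a_1>0$ with $a_1$ independent of $f$ --- to ``standard acyclicity theory of uniform persistence for dissipative semiflows'' or to an unspecified excursion argument. That upgrade is precisely the content of part (b), and the abstract theory cannot simply be cited: one would have to verify asymptotic compactness/point dissipativity of the solution semiflow on $L^1(0,a_\dagger)$, uniform \emph{weak} persistence in the required functional sense, and isolation/acyclicity of the boundary dynamics, none of which you check (and none of which is immediate for this renewal system). The paper's route is elementary and quantitative: assuming $\rho(t_k)\to 0$ with $t_k$ chosen so that $\rho(t_k)=\inf_{t_k-a_2\le t\le t_k}\rho(t)$, the lower renewal estimate gives $\int_{t_k-b_2}^{t_k-b_1}\rho\le c\,\rho(t_k)$, Lemma \ref{lemma1} then makes $\rho$, hence $P$ and $Q$, uniformly of size $O(\varepsilon_k)$ on a whole window $[t_k-\hat t,t_k]$ of prescribed length, and inserting this into (\ref{pt}) at $t=t_k$ yields $\rho(t_k)\ge \rho(t_k)R_0(1+o(1))$, impossible for $R_0>1$; the same lemma also produces the lower bounds for $P$ and $Q$. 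Your ``dichotomy'' paragraph contains the right ingredients, but without Lemma \ref{lemma1} (or an equivalent quantitative transfer of smallness from integrals of $\rho$ to pointwise bounds on long windows) the step is not carried out; likewise the $f$-independence of the constants, including the upper bounds you attribute to an ``$f$-free'' rerun of Theorem \ref{theoremboundedness}, is asserted rather than proved.

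The case $R_0=1$ also rests on an unproved step. Your idea (if $\limsup\rho=m>0$, propagate $\rho\approx m$ backwards to arbitrarily long windows, deduce $P\ge P_*>0$ there, and conclude the kernel mass drops strictly below $1$) is genuinely different from the paper, which multiplies by $e^{\int_0^t\psi(P(y))dy}$, compares $\alpha_1(t)=\rho(t)e^{\int_0^t\psi(P(y))dy}$ with a linear renewal inequality whose kernel has mass $R_0=1$ (bounded via Theorem 3.2 of \cite{we1}), concludes either $\rho\to0$ or $\int_0^\infty\psi(P(y))dy<\infty$, and finishes with Lemma \ref{lemma1}. But from $\rho(t_n)\to m$ and $\int_0^{a_\dagger}K(a)\,da=1$ one only gets, for each $\varepsilon>0$ and large $n$, that $\rho(t_n-a)\ge m-\sqrt{\varepsilon}$ for $a\in(b_1,b_2)$ \emph{outside a set of measure} $O(\sqrt{\varepsilon})$, not for a.e.\ $a$; the backward iteration then compounds both the thresholds and the exceptional sets, and you give no control ensuring the good sets still fill (or carry most of the mass of) every window of length $p_2-p_1$ inside the long interval, which is what the lower bound on $P$ requires. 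So while the skeleton of your argument is plausible, both delicate cases ($R_0=1$ and $R_0>1$) hinge on steps that are asserted rather than established, and these are exactly the points where the paper's proof invokes its key device, Lemma \ref{lemma1}.
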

 To prove Theorem \ref{persistencetheorem}, we need the following lemma, which we formulate here and leave its proof for Appendix~\ref{sec:appLem52}.

\begin{lemma}\label{lemma1}
Let $\rho=\rho(t)$ be a non-negative function defined for $t>0$ and satisfying

\begin{align}\label{est:rho1}
c_1 \int_{t-b_2}^{t-b_1}\rho(\tau)\,d\tau \le \rho(t) \le c_2\int_{t-a_2}^{t-a_1}\rho(\tau)\,d\tau \quad\mbox{for} \quad t>a_2
\end{align}
where $0<a_1<b_1<b_2<a_2$ and $c_1$ and $c_2$ are positive constants.
Let also
\begin{align}\label{est:rho2}
\int_{t^*-\beta_2}^{t^*-\beta_1}\rho(\tau)\,d\tau \le c_3\Lambda \quad\mbox{for certain $t^*$}.
\end{align}
for some constants $\beta_1$ and $\beta_2$
Then for each $\hat t$ there exist constants $t_1$ and $c^*$ independent of $\Lambda$, $\rho$ and $t$ such that if $t^*\ge t_1$, then
\begin{align}\label{est:rho3}
\max_{t^*-\hat t\le \tau\le t^*}\rho(\tau) \le c^*\Lambda.
\end{align}
\end{lemma}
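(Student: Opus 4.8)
The plan is to turn the single integral bound (\ref{est:rho2}) into the pointwise bound (\ref{est:rho3}) by \emph{transporting} the smallness of $\rho$ through the two halves of (\ref{est:rho1}). A convenient first step is to normalise: (\ref{est:rho1}) and (\ref{est:rho2}) are homogeneous of degree one in $\rho$, so after replacing $\rho$ by $\rho/\Lambda$ (for $\Lambda=0$ one argues directly, $\rho$ then vanishing on the window in (\ref{est:rho2})) we may assume $\Lambda=1$. Consequently $c^*$ and $t_1$ are forced to be quantities built only from $\hat t$ and the fixed data $a_1,a_2,b_1,b_2,c_1,c_2,\beta_1,\beta_2$ — which is exactly the asserted independence of $\rho$, $\Lambda$ and $t^*$.

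\textbf{The two elementary moves.} \emph{Forward transport}, from the upper inequality in (\ref{est:rho1}): if $\int_I\rho\le\varepsilon$ on a sufficiently long interval $I=[p,q]$, then $\rho(t)\le c_2\varepsilon$ for every $t$ with $[t-a_2,t-a_1]\subseteq I$, i.e. on the interval $[p+a_2,q+a_1]$; integrating this bound (with the interval length as factor) one may repeat the move, each repetition translating the controlled interval to the right by an amount in $[a_1,a_2]$ and multiplying the constant by a structural factor. Since $a_1>0$ each repetition is a genuine step, so finitely many of them carry smallness forward across any interval of prescribed length. \emph{Backward transport}, from the lower inequality in (\ref{est:rho1}): wherever $\rho(t)\le\eta$ is already known, one gets $\int_{t-b_2}^{t-b_1}\rho\le\eta/c_1$; applying this along a whole interval on which $\rho\le\eta$, and covering with finitely many windows of width $b_2-b_1$, yields integral smallness, with a structural constant, on an interval lying to the left of the controlled one.

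\textbf{Chaining.} Starting from (\ref{est:rho2}), a forward transport produces a pointwise bound $\rho\le c_0$ on an interval ending slightly to the right of $t^*-\beta_1$; alternating backward and forward transports then extends this, in a number of steps bounded in terms of $\hat t$ and $a_i,b_i,\beta_i$, to a pointwise bound $\rho\le c^*$ on an interval containing $[t^*-\hat t,t^*]$, with $c^*$ a product of the accumulated structural factors. All arguments $\tau$ entering the chain lie between $t^*$ and $t^*-\hat t-C$ for a structural $C$; requiring each of them to exceed $a_2$, so that (\ref{est:rho1}) is genuinely in force there, is exactly what $t^*\ge t_1$ provides for a suitable $t_1=t_1(\hat t)$.

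\textbf{Main obstacle.} The delicate point is the backward transport: one application of the lower inequality moves a window of width $b_2-b_1$ to the left by an amount that can be anywhere in $[b_1,b_2]$, so a naive iteration may leave gaps on which $\rho$ is uncontrolled. One must interleave the forward move, or invoke the lower inequality at several sub‑average points of each already‑controlled interval, so that the controlled region stays a genuine interval, and then verify that the number of alternations and the product of the accumulated factors still depend only on $\hat t$ and on $a_i,b_i,c_i,\beta_i$ — not on $\rho$, $\Lambda$ or $t^*$. The normalisation, the two one‑step estimates, and the bookkeeping that extracts $t_1$ from the left‑most argument used are otherwise routine.
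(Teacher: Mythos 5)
Your two one-step moves are each correct, and your endgame (rightward propagation via the upper inequality once a pointwise bound holds on a sufficiently long interval, and extracting $t_1$ from the left-most time at which \eqref{est:rho1} is invoked) coincides with the final stage of the paper's argument. But the chaining has a genuine gap, in two places. First, the opening move can be vacuous: the forward transport turns an integral bound on $I=[p,q]$ into a pointwise bound only on $[p+a_2,\,q+a_1]$, which is empty unless $q-p>a_2-a_1$; the hypothesis \eqref{est:rho2} gives a window of length $\beta_2-\beta_1$ with no assumed relation to $a_2-a_1$, and in the applications inside Theorem \ref{persistencetheorem} the window is typically shorter (e.g.\ $[\beta_1,\beta_2]=[b_1,b_2]\subset(a_1,a_2)$), so no pointwise information can be extracted at the start. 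Second, your alternation cannot reach leftward, although \eqref{est:rho3} demands control down to $t^*-\hat t$ for arbitrary $\hat t$: your backward move requires pointwise input and shifts a window left by at most $b_2$, while every return from integral to pointwise control via the upper inequality shifts the controlled region right by exactly $a_2$ on its left edge. Since $a_2>b_2$ and $a_1<b_1$, one backward-plus-forward cycle maps a pointwise-controlled interval $[\alpha,\beta]$ into $[\alpha+(a_2-b_2),\,\beta-(b_1-a_1)]$, a strict subinterval; hence no interleaving of your two moves ever produces pointwise control at any time to the left of roughly $t^*-\beta_2+a_2$. The obstacle you single out (gaps between the backward windows) is not the real difficulty — the union of the sliding windows $[t-b_2,t-b_1]$ over an interval of $t$'s is automatically an interval — the difficulty is directional, forced by the ordering $a_1<b_1<b_2<a_2$.

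The missing ingredient is the paper's key device: a backward transport carried out entirely at the level of integral bounds. Inserting the lower half of \eqref{est:rho1} under the integral in \eqref{est:rho2} and swapping the order of integration (identity \eqref{order}) gives, for any $b_1<b_1'<b_2'<b_2$, a bound of the form $\int_{t^*-\beta_2-b_2'}^{t^*-\beta_1-b_1'}\rho(s)\,ds\le c'\Lambda$, because for $s$ in this restricted range the inner interval of integration has length bounded below by a positive structural constant. Thus each such step moves the window left by an amount in $[b_1',b_2']$ and, crucially, lengthens it by $b_2'-b_1'$, without ever needing a pointwise estimate. Iterating $k$ times yields a window of length $\beta_2-\beta_1+k(b_2'-b_1')$ lying as far to the left as required; only then does one apply the upper inequality to get pointwise smallness on that long window and propagate it to the right, exactly as in your forward transport, to cover $[t^*-\hat t,\,t^*]$. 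Without this Fubini step both the start of your chain and its leftward reach fail, so the proposal as written does not prove the lemma.
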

Now note that equation (\ref{rho}) together with the fact $\beta$ is bounded implies that the number of newborns satisfies the upper estimate in (\ref{est:rho1}). Since $P$ is bounded and $\beta$ is bounded from below on $(b_1,b_2)$ we have that the left-hand side of (\ref{est:rho1}) is true as well.
Lemma \ref{lemma1} now tells us that, for large $t$, if the integral over $\rho$ is small  i.e. $\Lambda$ is small, we have that $\rho$ also has to be small in the interval over which $\rho$ was integrated.

\begin{proof}[Proof of theorem \ref{persistencetheorem}]
a) Suppose that $R_0<1$,  $\varepsilon >0$ and $\rho^*=\limsup_{t\rightarrow\infty}\rho(t)$. From (\ref{pt}) it follows that
\begin{align*}
\rho(t) &\le \int_0^{\infty} \beta(a,Q(t))(\rho^*+\varepsilon) e^{-\int_0^a\mu(v,P(t))dv}\,da \\
&\le \int_0^{\infty} \beta(a,0)(\rho^*+\varepsilon) e^{-\int_0^a\mu(v,0)dv}\,da \\
&= (\rho^*+\varepsilon)R_0,
\end{align*}
for large $t$. Moreover, there exists a sequence $\{t_k\}$, $k=1,2,...$, such that $t_k\rightarrow\infty$ and $\rho(t_k) \ge \rho^*-\varepsilon$. From here we have that
\begin{align*}
\rho^*-\varepsilon \le (\rho^*+\varepsilon)R_0,
\end{align*}
and
$$ \rho^* \le \varepsilon \frac{1+R_0}{1-R_0},$$
implying that $\rho^*=0$.
This and equations (\ref{P0}) and (\ref{Q0}) lead us to the conclusion that $P(t)\rightarrow 0$ and $Q(t)\rightarrow 0$ as $t\rightarrow\infty$.
\\
Let us now consider the case when $R_0=1$. 
Using \ref{A2} and equation (\ref{pt}), we obtain
\begin{align}
\rho(t) &\le   \int_0^t \beta(a,0)e^{-\int_0^a\mu(v,P(v+t-a))dv}\rho(t-a)\,da \label{temprho0}\\
&\quad+ \int_t^{\infty}\beta(a,0)f(a-t)e^{-\int_{a-t}^a\mu(v,P(v+t-a))dv}\, da, \label{temprho}
\end{align}
and for $t>a_\dagger$ we have
\begin{align}\label{simple}
\rho(t) =   \int_0^t \beta(a,0)e^{-\int_0^a\mu(v,P(v+t-a))dv}\rho(t-a)\,da.
\end{align}
Similarly, 
from \ref{A2} and equation (\ref{Pt}), we get
\begin{align}
P(t) &\le  \int_0^tp(a)e^{-\int_0^a \mu_0(v)+\psi(P(v+t-a))dv}\rho(t-a)\,da \label{tempP0}\\
&\quad+ \int_t^{\infty}p(a)f(a-t)e^{-\int_{a-t}^a \mu_0(v)+\psi(P(v+t-a))dv}\, da. \label{tempP}
\end{align}
After the change of variables $x=t-a$, $y=v+t-a$ in (\ref{temprho0}) and (\ref{tempP0}), and $x=a-t$, $y=v+t-a$ in (\ref{temprho}) and (\ref{tempP}), we obtain
\begin{align*}
\rho(t) &\le\int_0^t\beta(t-x,0)\rho(x)e^{-\int_x^{t} \mu_0(y-x)+\psi(P(y))dy}\,dx
\\
&+ \int_0^{\infty}\beta(t+x,0)f(x)e^{-\int_0^{t}\mu_0(y+x)+ \psi(P(y))dy}\, dx,
\end{align*}
and
\begin{align*}
P(t) &\le\int_0^tp(t-x)\rho(x)e^{-\int_x^{t}\mu_0(y-x)+\psi(P(y))dy}\,dx
\\
&+ \int_0^{\infty}p(t+x)f(x)e^{-\int_0^{t}\mu_0(y+x)+\psi(P(y))dy}\, dx,
\end{align*}
which we can rewrite as
\begin{align*}
\rho(t) &\le\int_0^t\beta(t-x,0)e^{-\int_0^{t-x}\mu_0(y)dy}\rho(x)e^{-\int_x^{t} \psi(P(y))dy}\,dx
\\
&+ \int_0^{\infty}\beta(t+x,0)e^{-\int_0^{t+x}\mu_0(y)dy} f(x)e^{\int_0^x \mu_0(y)dy}e^{-\int_0^{t}\psi(P(y))dy}\, dx,
\end{align*}
and
\begin{align*}
P(t) &\le\int_0^tp(t-x)e^{-\int_0^{t-x}\mu_0(y)dy}\rho(x)e^{-\int_x^{t}\psi(P(y))dy}\,dx
\\
&+ \int_0^{\infty}p(t+x)e^{-\int_0^{t+x}\mu_0(y)dy}f(x)e^{\int_0^x \mu_0(y)dy}e^{-\int_0^{t}\psi(P(y))dy}\, dx.
\end{align*}
Multiplying both equations by $e^{\int_0^t\psi(P(y))dy}$ and introducing the notations
\begin{align}\label{ab}
\alpha_1(t)&=\rho(t)e^{\int_0^t\psi(P(y))dy}, \quad  \alpha_2(t)=P(t)e^{\int_0^t\psi(P(y))dy},
\\
M(a)&=\beta(a,0)e^{-\int_0^{a}\mu(v,0)dv}, \quad  S(a)=p(a)e^{-\int_0^a\mu_0(v)dv}
\end{align}
and
\begin{align}
F(a)&=f(a)e^{\int_0^a \mu_0(v)dv}
\end{align}
we get
\begin{align}
\alpha_1(t) &\le \int_0^t M(t-x)\alpha_1(x)\,dx + \int_0^{\infty}M(t+x)F(x)\, dx, \label{a1}\\
\alpha_2(t) &\le \int_0^t S(t-x)\alpha_1(x)\,dx + \int_0^{\infty}S(t+x)F(x)\, dx. \label{b1}
\end{align}
We note that $\alpha_1(t)$ is the number of newborns to a density-independent variant of the original problem (\ref{gen1})--(\ref{genic1}), with $R_0=1$ and initial age distribution $F(a)$.  Since $R_0=1$, then by Theorem $3.2$ in \cite{we1} with $\sigma=0$ we have that $\alpha_1(t)\le C$ and from (\ref{b1}) it follows that $\alpha_2(t)$ is also bounded. From (\ref{ab}) we get
$$\rho(t)e^{\int_0^t\psi(P(y))dy} \le C \quad\mbox{and}\quad P(t)e^{\int_0^t\psi(P(y))dy} \le C.$$
To prove convergence of $\rho$, $P$ and $Q$ we distinguish two cases.

If $\rho(t)\rightarrow 0$ then $P(t)\rightarrow 0$ and $Q(t)\rightarrow 0$ as $t\rightarrow\infty$, and the claim holds. If the above does not hold, then  $\int_0^{\infty}\psi(P(y))\,dy \le C$. In this case by assumption (\ref{definiteness}) there exists a sequence $t_k\rightarrow\infty$ as $k\rightarrow\infty$ such that $\varepsilon_k=P(t_k)\rightarrow 0$ as $k\rightarrow\infty$. We can in addition require that $|t_k-t_{k-1}|<1$. From equation (\ref{P0}) and condition \ref{H'3}, this implies that there exists a constant $c$ such that for all $k$,
$$\int_{t_k-p_2}^{t_k-p_1}\rho(\tau)\,d\tau\le c\varepsilon_k, \quad \mathrm{supp}(p)=[p_1,p_2].$$
By Lemma \ref{lemma1} with comments, for large enough $k$, we have $\max_{t_k-1\le \tau\le t_k}\rho(\tau) \le c^*\varepsilon_k$. By the requirement $|t_k-t_{k-1}|<1$ we can now conclude that $\rho(t)\rightarrow 0$. From (\ref{P0}) and (\ref{Q0}) we now also see that $P(t)\rightarrow 0$ and $Q(t)\rightarrow 0$.

Finally, we consider the case $R_0>1$ and we will show that $\rho(t)\ge \delta_1>0$ for large $t$. To this end, assume that there exist a sequence $t_k\rightarrow \infty$ as $k\rightarrow\infty$ such that $\varepsilon_k=\rho(t_k)\rightarrow 0$. Without loss of generality we can assume that $\rho(t_k)=\inf_{t_k-a_2\le t\le t_k}\rho(t)$. Since $\int_{t_k-b_2}^{t_k-b_1}\rho(\tau)\,d\tau \le c_1\rho(t_k)$, by Lemma \ref{lemma1} it follows that
$$\max_{t_k-\hat t\le \tau\le t_k}\rho(\tau) \le c_3\rho(t_k)=c_3\varepsilon_k,$$
which implies that
$$\max_{t_k-\hat t\le \tau\le t_k}P(\tau) \le c\varepsilon_k.$$
Using (\ref{pt}), for $t=t_k$ we get
$$\rho(t_k)\ge \rho(t_k)\int_0^{t_k}\beta(a,0)e^{-\int_0^a(\mu(v,P)-\mu(v,0))dv}\,da \ge \rho(t_k)R_0(1+o(\varepsilon_k)),$$
which is impossible because of $R_0>1$. We can now conclude that $a_1<\rho(t)<b_1$.

Suppose now that there exists a sequence $t_k\rightarrow\infty$ as $k\rightarrow\infty$, such that $P(t_k)\rightarrow0$ or $Q(t_k)\rightarrow 0$ as $k\rightarrow\infty$. Then from (\ref{Pt}) and (\ref{Qt}) it follows that
$$\varepsilon_k \ge c\int_{t_k-p'_2}^{t_k-p'_1}\rho(\tau)\,d\tau.$$
By Lemma \ref{lemma1} we get that
$$\liminf_{t\rightarrow\infty} \rho(t)=0,$$
which is impossible according to the previous part of the proof.
\end{proof}
Note that in the case when the maximum of $\beta(a,\cdot)$ and minimum of $\mu(a,\cdot)$ is not attained in 0, we can still come to similar conclusions using the same technique by redefining $R_0$. For example,
assume there exist functions $\mu_-$ and $\beta_+$ such that
\begin{align}
\mu(a,p)&\geq \mu_-(a)
\\
\beta(a,Q)&\leq \beta_+(a)
\end{align}
for all $a$.
Let
\begin{equation}
R_0^+=\int_0^\infty \beta(a,Q)e^{-\int_0^a\mu_-(v)dv}da
\end{equation}
then if $R_0^+<1$ we have that $\rho(t)\rightarrow0, P(t)\rightarrow 0, Q(t)\rightarrow 0$
%
\section{Permanence by positive density-dependence}\label{permanence}
Let us assume that influence of the Allee effect manifests though changes in the death rate. This means that in a small population, every increase in age-class decreases death rate. In other words, for every $a$, death rate $\mu(a,P)$ is a decreasing function of $P$ for $P\in(0,\delta)$.

We prove that if $R_0=1$ survival is possible due to the Allee effect.

\begin{theorem}
If $\mu(a,P)-\mu(a,0)<0$ for $P\in(0,\delta)$ and $R_0=1$, then $$\liminf_{t\rightarrow\infty}\rho(t)>0 \quad\mbox{and}\quad \liminf_{t\rightarrow\infty}P(t)>0.$$
\end{theorem}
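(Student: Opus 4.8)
The plan is to argue by contradiction, assuming $\liminf_{t\to\infty}\rho(t)=0$; the statement for $P$ will follow at the end. Put $M(a)=\beta(a,0)e^{-\int_0^a\mu(v,0)\,dv}$; since $\mu(v,0)=\mu_0(v)$ (because $\mathscr{M}(a,0)=0$), $\beta(a,\cdot)\equiv0$ off $(a_1,a_2)$, and in this section the density feedback acts through $\mu$ (so $\beta$ does not depend on $Q$), we have $\int_{a_1}^{a_2}M(a)\,da=R_0=1$. The core observation is: if $\tau\ge a_\dagger$ satisfies $\rho(\tau)=\min_{[\tau-a_2,\tau]}\rho$ and, moreover, $0<P<\delta$ on $(\tau-a_2,\tau)$ with $\rho>0$ there, then in \eqref{pt} every value $P(v+\tau-a)$ with $a\in(a_1,a_2)$, $v\in(0,a)$ lies in $(0,\delta)$, so the hypothesis $\mu(a,P)-\mu(a,0)<0$ gives $e^{-\int_0^a\mu(v,P(v+\tau-a))\,dv}>e^{-\int_0^a\mu(v,0)\,dv}$ for every $a>0$; as $\beta(a,0)\rho(\tau-a)>0$ on the positive-measure set $(b_1,b_2)$, \eqref{pt} forces $\rho(\tau)>\int_{a_1}^{a_2}M(a)\rho(\tau-a)\,da\ge\rho(\tau)\int_{a_1}^{a_2}M(a)\,da=\rho(\tau)$, which is absurd. (Here we may assume $\rho>0$ for all large $t$: otherwise $\rho$ vanishes on a window of length $a_2$ and hence, by \eqref{pt}, identically for large $t$, making the asserted conclusion vacuous.) Thus it remains to produce, from $\liminf\rho=0$, a running minimum $\tau$ of $\rho$ near which $P<\delta$.

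\textbf{Producing the running minimum.} If $\rho(t)\to0$ then $P(t)\le\|p\|_\infty\int_{t-a_\dagger}^{t}\rho\to0$, so $P<\delta$ on some half-line $[s,\infty)$, and by the computation above $\rho(t)>\int_{a_1}^{a_2}M(a)\rho(t-a)\,da$ there; a maximum-principle argument then finishes the case: were $t^\ast$ the first time $\ge s+a_2$ with $\rho(t^\ast)<m:=\min_{[s,s+a_2]}\rho$ (positive since $\rho>0$), then $\rho\ge m$ on $[s,t^\ast]$ would force $\rho(t^\ast)>m\int_{a_1}^{a_2}M(a)\,da=m$, a contradiction, so $\rho\ge m>0$ eventually, against $\rho\to0$. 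If instead $c_0:=\limsup\rho>0$, choose a level $\ell\in(0,c_0)$ so small that $\rho<\ell$ on any interval of length $a_\dagger$ forces $P<\delta$ on it. Since $\liminf\rho=0<\ell<\limsup\rho$, the open set $\{\rho<\ell\}$ has infinitely many bounded components $(\alpha_k,\beta_k)$ with $\alpha_k\to\infty$, $\rho(\alpha_k)=\ell$, and $\min_{[\alpha_k,\beta_k]}\rho\to0$; for large $k$ this minimum is attained at some interior point $\tau_k$. Feeding the dip $\rho(\tau_k)\to0$ into Lemma~\ref{lemma1} — whose hypothesis \eqref{est:rho2} at $t^\ast=\tau_k$ is supplied by the always-valid left inequality $\rho(t)\ge c_1\int_{t-b_2}^{t-b_1}\rho$ of \eqref{est:rho1} — yields $\max_{[\tau_k-\hat t,\tau_k]}\rho\to0$ for each fixed $\hat t$; taking $\hat t>a_\dagger+a_2$ gives $\rho<\ell$ on $[\tau_k-\hat t,\tau_k]$ for large $k$, hence $\alpha_k<\tau_k-\hat t$, so $[\tau_k-a_2,\tau_k]\subset(\alpha_k,\beta_k)$ and $P<\delta$ on $(\tau_k-a_2,\tau_k)$; and on $[\tau_k-a_2,\tau_k]$ one has $\rho\ge\min_{[\alpha_k,\beta_k]}\rho=\rho(\tau_k)$, so $\tau_k$ is the sought running minimum. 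This contradiction gives $\liminf\rho=:r_0>0$, and then from \eqref{Pt}, for large $t$, $P(t)\ge\delta\int_{p_1}^{p_2}\rho(t-a)\,e^{-\int_0^{a}\mu(v,P(v+t-a))\,dv}\,da\ge\delta e^{-C}\int_{t-p_2}^{t-p_1}\rho\ge c\,r_0>0$, so $\liminf P>0$.

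\textbf{Main obstacle.} The delicate step is the middle one: manufacturing a point that is simultaneously a genuine running minimum of $\rho$ and sits in a zone where $P<\delta$. The one-line ``$\rho(\tau)>\rho(\tau)$'' contradiction is trivial, but it only bites at such a point, while an oscillatory solution is below the Allee threshold $\delta$ only on time-windows whose location is a priori uncontrolled — Lemma~\ref{lemma1} is exactly the tool that pins them down, by forcing a deep dip of $\rho$ to be the bottom of a correspondingly long excursion below $\ell$. One subtlety worth flagging: the argument relies on the density feedback entering only (or at least favourably) through $\mu$; a genuine negative density-dependence in the fertility would, near the trivial equilibrium, contribute a loss of the same vanishing order as the strict Allee gain $e^{-\int_0^a\mu(v,P)\,dv}-e^{-\int_0^a\mu(v,0)\,dv}$, and the two could cancel — which is precisely why $R_0=1$ is borderline and why it is the mortality Allee that tips the balance.
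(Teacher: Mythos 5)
Your proposal is essentially the paper's own proof: the contradiction $\rho(\tau)>\rho(\tau)\int_{a_1}^{a_2}M(a)\,da=\rho(\tau)$ at a running minimum of $\rho$ sitting in a window where Lemma~\ref{lemma1} forces $\rho$, and hence $P$, below the Allee threshold $\delta$, with $R_0=1$ providing the kernel mass. The only differences are refinements rather than a new route: you construct the running-minimum point explicitly (the $\rho\to0$ case and the sublevel-set components) where the paper simply takes $t_k$ with $\rho(t_k)=\inf_{[t_k-a_2,t_k]}\rho$ ``without loss of generality,'' and you make explicit the tacit assumptions (fertility not decreasing in $Q$, nontrivial $\rho$) that the paper's argument also uses silently.
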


\begin{proof}
Let
\begin{equation}\label{MF}
M(a)=\beta(a,Q(t))e^{-\int_0^{a}\mu(v,0)dv}
\end{equation}
Using (\ref{pt}), (\ref{MF}) and the assumption of the theorem, for $P<\delta$ and sufficiently large $t$ we obtain
\begin{align*}
\rho(t) &=  \int_0^tM(a)\rho(t-a)e^{-\int_0^a(\mu(v,P(v+t-a))-\mu(v,0))dv}\,da \\
&\ge \int_0^tM(a)\rho(t-a)\,da.
\end{align*}
To prove the claim, we suppose that  $\liminf_{t\rightarrow\infty}\rho(t)=0$. Then there exists a sequence $\{t_k\}$ such that $\rho(t_k)\rightarrow 0$ as $k\rightarrow\infty$. Without loss of generality we can assume that $\rho(t_k)=\inf_{t_k-a_2<t<t_k}\rho(t)$.
To show that $\rho(t)$ is small on $[t_{k-1},t_k]$, notice that
$$\varepsilon_k=\rho(t_k)=\int_0^t\beta(a,Q(t))\rho(t_k-a)e^{-\int_0^a\mu(v,P(v+t-a))dv}\,da,$$
and
$$\int_{t_k-a_2}^{t_k-a_1}\rho(\tau)\,d\tau \le c\varepsilon_k.$$
Then by Lemma \ref{lemma1} it follows that
$$\max_{t_k-t\le \tau\le t_k}\rho(\tau)\le c^*\varepsilon_k,$$
which implies that
$$P(t)=\int_0^tp(a)\rho(t-a)e^{-\int_0^a\mu(v,P(v+t-a))dv}\,da \le \delta$$
on $t_k-\hat t+M\le \tau \le t_k$. This also implies
$$\rho(t_k) > \int_0^{\infty}M(a)\rho(t_k)\,da=\rho(t_k),$$
which is impossible.
\end{proof}

We aim to prove conditions for extinction and permanence in the case of the Allee effect.
Under assumption that the weighted age-class function are constant $P(t)=\mathcal{P}$ and $Q(t)=\mathcal{Q}$ are constant over individuals life time, the weighted net reproduction rate $R(\mathcal{P},\mathcal{Q})$ is defined by
\begin{equation}
R(\mathcal{P},\mathcal{Q})=\int_0^\infty \beta(a,\mathcal{Q})e^{-\int_0^a \mu(v,\mathcal{P})dv}da.
\end{equation}
Notice that $R(0,0)=R_0$ as defined in (\ref{R_00}).
 \begin{lemma}\label{close enough}
 Assume that $R(\mathcal{P},\mathcal{Q})<1$ for $\mathcal{P}<P^*$, $\mathcal{Q}<Q^*$.
 Let $\rho^*>0$ be such that
 \begin{equation}
 \rho^*< \frac{P^*}{\int_0^\infty p(a)da}
 \quad \mbox{and}\quad
 \rho^*< \frac{Q^*}{\int_0^\infty q(a)da},
\end{equation}
  then if $\rho(t)<\rho*$ on some interval $t^*-a_\dagger<t<t^*$, then $\rho,Q,P\rightarrow 0$
\end{lemma}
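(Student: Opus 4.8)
The plan is to show that the hypothesis forces the weighted populations $P$ and $Q$ below their thresholds and keeps them there for all $t\geq t^*$, so that \eqref{pt} becomes a subcritical renewal inequality for the newborns; iterating it then gives $\rho\to 0$, and $P,Q\to 0$ follow from \eqref{Pt}--\eqref{Qt}.

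\emph{Step 1: $P$ and $Q$ stay small.} Since no individual survives past age $a_\dagger$, for every $t\geq t^*$ the integrals \eqref{Pt}, \eqref{Qt} involve only $\rho(t-a)$ with $t-a\geq t^*-a_\dagger$; combining the hypothesis $\rho<\rho^*$ on $(t^*-a_\dagger,t^*)$ with $\mu\geq 0$ in the exponentials gives at once $P(t^*)\leq\rho^*\int_0^\infty p(a)\,da<P^*$ and $Q(t^*)\leq\rho^*\int_0^\infty q(a)\,da<Q^*$, both strict. I would then run a continuity argument: putting $T=\sup\{\tau\geq t^*:\rho(s)\leq\rho^*\ \text{for}\ s\in[t^*,\tau]\}$, I claim $T=\infty$. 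If $T<\infty$, then $\rho(T)=\rho^*$ while $\rho\leq\rho^*$ on $[t^*-a_\dagger,T]$, and applying \eqref{pt} at $t=T$ with the feeding cohorts $\rho(T-a)$ ($a\in[a_1,a_2]$) all $\leq\rho^*$, and with the survival exponents bounded below using $\mu\geq 0$ for births before $t^*$ and, for births after $t^*$ (where $P<P^*$), the monotonicity $\mu(v,P(\cdot))\geq\mu(v,P^*)$ of $\mu(a,\cdot)$ near $0$, one obtains $\rho(T)\leq R(\mathcal P,\mathcal Q)\,\rho^*$ for some $\mathcal P<P^*$, $\mathcal Q<Q^*$, hence $\rho(T)<\rho^*$ --- a contradiction. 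Consequently $P(t)\leq\rho^*\int_0^\infty p(a)\,da=:P^{**}<P^*$ and $Q(t)\leq\rho^*\int_0^\infty q(a)\,da=:Q^{**}<Q^*$ for all $t\geq t^*$, by \eqref{Pt}--\eqref{Qt}.

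\emph{Step 2: decay.} Set $r:=\sup\{R(\mathcal P,\mathcal Q):0\leq\mathcal P\leq P^{**},\ 0\leq\mathcal Q\leq Q^{**}\}$, which is $<1$ by continuity of $R$ (from the continuity/Lipschitz hypotheses on $\beta,\mu$) and compactness. For $t\geq t^*+a_2$ all $P$-values occurring in \eqref{pt} lie in $[0,P^{**}]$ and $Q(t)\in[0,Q^{**}]$, so \eqref{pt} yields $\rho(t)\leq r\sup_{t-a_2\leq\tau\leq t-a_1}\rho(\tau)$; iterating $k$ times gives $\rho(t)\leq r^k\sup_s\rho(s)$ once $t\geq t^*+k a_2$, and $\sup_s\rho(s)<\infty$ by Theorem \ref{theoremboundedness}, so $\rho(t)\to 0$. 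Then $P(t),Q(t)\to 0$ from \eqref{Pt}--\eqref{Qt}, since each is bounded by a constant times $\sup_{[t-a_\dagger,t]}\rho$.

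The main obstacle is the continuity argument in Step 1. For $t$ just above $t^*$ the survival factor of a cohort born in $[t^*-a_\dagger,t^*]$ still depends on the values of $P$ at times just before $t^*$, which the hypothesis does not control, so on that transient part one can use only $\mu\geq 0$ (together with whatever global lower bound the Allee shape of $\mu(a,\cdot)$ supplies), and one must verify that the amplification this permits is still absorbed by the subcriticality $R(\mathcal P,\mathcal Q)<1$ on $\mathcal P<P^*$, $\mathcal Q<Q^*$ --- this is precisely where the hypotheses are used. If closing this loop proves awkward, an alternative is to feed the single inequality $Q(t^*)<Q^*$ --- equivalently, smallness of $\int\rho$ over a window at $t^*$ --- into Lemma \ref{lemma1}, which promotes it to a pointwise bound on $\rho$ over a whole preceding interval, and then to advance forward in steps of length $a_1$.
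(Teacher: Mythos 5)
Your Step 1 is where the argument breaks, and the difficulty you yourself flag is fatal as written. At the putative first crossing time $T$ (or at any $t$ with $t-t^*$ small), essentially all ages $a\in(a_1,a_2)$ contributing to \eqref{pt} correspond to cohorts born \emph{before} $t^*$, whose survival factors involve $P(s)$ for $s<t^*$, which the hypothesis does not control. Bounding those factors merely by $1$ (via $\mu\ge 0$) throws away exactly the mortality discount that makes $R(\mathcal P,\mathcal Q)<1$: you obtain only $\rho(T)\le \rho^*\int_{a_1}^{a_2}\beta(a,Q(T))\,da+O(T-t^*)$, and $\int\beta\,da$ need not be less than $1$, so no contradiction with $\rho(T)=\rho^*$ follows and the claimed inequality $\rho(T)\le R(\mathcal P,\mathcal Q)\rho^*$ is not established. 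Since the conclusion of Step 1 ($P\le P^{**}<P^*$, $Q\le Q^{**}<Q^*$ for \emph{all} $t\ge t^*$) is the foundation for Step 2, the proposal as it stands has a genuine gap; your alternative suggestion (feeding $Q(t^*)<Q^*$ into Lemma \ref{lemma1}) is left as a sketch and is also not how the paper proceeds.

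The paper closes precisely this loop by a different device: it never tries to propagate $\rho\le\rho^*$ up to a first crossing time, but advances in explicit small steps. Using the global bound $\rho_+$ from Theorem \ref{theoremboundedness}, on $(t^*,t^*+\gamma)$ with $\gamma\le\min\bigl(\varepsilon,\rho^*(1-R_1)/(2\rho_+)\bigr)$ only an age interval of length $\varepsilon$ is uncontrolled; its contribution is at most $\varepsilon\rho_+$, and this error is absorbed because the step produces the \emph{strictly stronger} bound $\rho<\frac{1+R_1}{2}\rho^*$, where $R_1=\max_{\mathcal P\le\delta_P P^*,\,\mathcal Q\le\delta_Q Q^*}R(\mathcal P,\mathcal Q)<1$ and $\delta_P P^*<P^*$, $\delta_Q Q^*<Q^*$ are the slightly shrunken thresholds reached by $P,Q$ on the step. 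Iterating finitely many steps fills a window of length $a_\dagger$, and repeating the argument window by window gives $\rho<\bigl(\frac{1+R_1}{2}\bigr)^k\rho^*$ on $(t^*+(k-1)a_\dagger,t^*+ka_\dagger)$, so the threshold bounds and the decay come out of one and the same iteration (your Step 2 then becomes unnecessary). If you want to salvage your structure, replace the first-crossing argument by this quantitative small-step bootstrap with the margin $\frac{1+R_1}{2}<1$; the remaining comparison of the time-dependent exponent with the constant-argument rate $R$ is the same one the paper uses and is harmless once the relevant $P$-history lies below the threshold.
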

\begin{remark}
Lemma \ref{close enough} tells us that, to conclude extinction, in some cases it might be enough to look at the number of newborns during a time period of the maximal lifetime.
\end{remark}
\begin{remark}
In \cite{Iannelli} it is concluded that $R_0<1$ implies asymptotic stability of the trivial equilibrium in the sense that there exists $\delta$ such that if $||f(a)||_1<\delta$ then $$\lim_{t\rightarrow \infty} ||n(\cdot,t)||_1=0.$$ Lemma \ref{close enough} is a similar result but differs in the way that we look at the number newborns to conclude pointwise convergence of $\rho,P$ and $Q$.
\end{remark}

As we will see the restriction on $\rho^*$ is chosen as to guarantee that if $\rho(t-v)<\rho^*$ on $0<v<a_\dagger$ then $P(t)<P^*$ and $Q(t)<Q^*$. Which then implies that, for all time, each  individual would live a life with net reproductive rate less then one. This in turn would imply extinction.
\begin{proof}
Let $\rho_+$ be the theoretical maximum of $\rho$ as in (\ref{rho+}) and let $||p||_\infty=\sup p$. Let
\begin{equation*}
\begin{aligned}
\varepsilon_P &=\frac{P^*-\int_0^\infty p(a)\rho^*da}{2||p||_\infty\rho_+}>0, \\
\varepsilon_Q &=\frac{Q^*-\int_0^\infty q(a)\rho^*da}{2||p||_\infty\rho_+}>0, \\
\delta_P &=(1-\frac{\rho^*\int_0^\infty p(a)da}{2P^*}), \\
\delta_Q &=(1-\frac{\rho^*\int_0^\infty q(a)da}{2Q^*}),
\end{aligned}
\end{equation*}
and furthermore let $\varepsilon=\min(\varepsilon_P,\varepsilon_Q)$. For $t^*<t<t^*+\varepsilon$ we have
\begin{equation}
\begin{aligned}
P(t) &= \int_0^t p(a)\rho(t-a)e^{-\int_0^a\mu(v,P(v+t-a))dv}\,da \\
&\quad + \int_t^{\infty}p(a)f(a-t)e^{-\int_{a-t}^a\mu(v,P(v+t-a))dv}\, da,
\\
&= \int_0^\varepsilon p(a)\rho(t-a)e^{-\int_0^a\mu(v,P(v+t-a))dv}\,da
\\
& \quad+\int_\varepsilon^t p(a)\rho(t-a)e^{-\int_0^a\mu(v,P(v+t-a))dv}\,da
\\
&<
\varepsilon_P ||p||_\infty \rho_+ +\int_0^\infty p(a)\rho^*da=\delta_P P^*
\end{aligned}
\end{equation}
In the same way for $t^*<t<t^*+\varepsilon$ we have
\begin{equation}
Q(t)<\delta_Q Q^*.
\end{equation}
Let
\begin{equation}
R_1=\max_{\mathcal{P}\leq \delta_P P^*, \mathcal{Q}\leq \delta_Q Q^*}R(\mathcal{Q},\mathcal{P}).
\end{equation}
Now we estimate the number of newborns $\rho$ on the interval $t^*\leq t<t^*+\varepsilon$,
\begin{align}
\rho(t) &=\int_0^t\beta(a,Q(t))\rho(t-a)e^{-\int_0^a\mu(v,P(v+t-a))dv}\,da + \int_t^{\infty}\beta(a,Q(t))f(a-t)e^{-\int_{a-t}^a\mu(v,P(v+t-a))dv}\, da\nonumber
\\
&\leq \rho^*\int_\varepsilon^t\beta(a,Q(t)) e^{-\int_0^a\mu(v,P(v+t-a))dv}\,da +\rho_+\int_0^\varepsilon \beta(a,Q(t))e^{-\int_0^a\mu(v,P(v+t-a))dv}\,da \label{replace}
\\
& \quad +\rho^*\int_t^{\infty}\beta(a,Q(t))e^{-\int_{a-t}^a\mu(v,P(v+t-a))dv}\, da\nonumber
\\
&\leq \rho^*R_1+\varepsilon \rho_+.
\end{align}
If $\varepsilon\leq \frac{\rho^*(1-R_1)}{2\rho_+}$, from the above inequality it follows that $\rho\leq \frac{1+R_1}{2}\rho^*<\rho^*$ on $t^*<t<t^*+\varepsilon $.
So let
\begin{equation}
\gamma=\min(\varepsilon,\frac{\rho^*(1-R_1)}{2\rho_+})
\end{equation}
then $\rho(t)<\frac{1+R_1}{2}\rho^*$ on $t^*<t<t^*+\gamma$. Iterating a finite amount of time we get that  $\rho(t)<\frac{1+R_1}{2}\rho^*$ on $t^*<t<t^*+a_\dagger$. We can use this result yet again to conclude that $\rho(t)<(\frac{1+R_1}{2})^2\rho^*$ on $t^*+a_\dagger<t<t^*+2a_\dagger$, $\rho(t)<(\frac{1+R_1}{2})^3\rho^*$ on $t^*+a_\dagger<t<t^*+3a_\dagger$ and so on. This implies that $\rho$ converges to zero. From (\ref{P0}) and (\ref{Q0}) we see that also $Q$ and $P$ converge to zero.
\end{proof}

In the next theorem we will see that a population that is not converging to zero is necessarily persistent.
\begin{theorem}\label{twooptions}
If $R_0<1$, then either $\rho,Q,P\rightarrow 0$ or there exist $\varepsilon_\rho,\varepsilon_P,\varepsilon_Q>0$ such that $\rho>\varepsilon_\rho, P>\varepsilon_P, Q>\varepsilon_Q$ i.e the population is persistent.
\end{theorem}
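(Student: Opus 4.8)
The plan is to reduce the dichotomy to the newborn function $\rho$ alone and then to exploit a rigidity effect: when $R_0<1$, $\rho$ cannot stay below a fixed small level over a whole interval of length $a_\dagger$ without being forced to extinction. First I would observe that ``$(\rho,P,Q)\to 0$'' is equivalent to ``$\rho\to 0$'': for $t\ge a_\dagger$ the last integrals in (\ref{P0}) and (\ref{Q0}) vanish, and since $p,q$ are bounded and supported in $[0,a_\dagger]$ while the survival factor is bounded (as $P$ is bounded by Theorem~\ref{theoremboundedness}), we get $P(t),Q(t)\le C\sup_{t-a_\dagger\le s\le t}\rho(s)$, so $\rho\to 0$ already forces $P,Q\to 0$. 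Thus it suffices to prove: if $\rho\not\to 0$, then $\liminf_{t\to\infty}\rho(t)>0$, and then also $\liminf_{t\to\infty}P(t)>0$ and $\liminf_{t\to\infty}Q(t)>0$.

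Next I would calibrate the constants of Lemma~\ref{close enough}. The function $(\mathcal P,\mathcal Q)\mapsto R(\mathcal P,\mathcal Q)$ is continuous (continuity of $\beta(\cdot,x)$ in $L^\infty$, of $\mathscr M(\cdot,x)$ in $L^1$, and dominated convergence) with $R(0,0)=R_0<1$, so there are $P^*,Q^*>0$ with $R(\mathcal P,\mathcal Q)<1$ whenever $\mathcal P\le P^*$ and $\mathcal Q\le Q^*$; then I fix $\rho^*>0$ so small that $\rho^*<P^*/\int_0^\infty p$ and $\rho^*<Q^*/\int_0^\infty q$. These are exactly the hypotheses of Lemma~\ref{close enough}.

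The crux is then as follows. Assume $\rho\not\to 0$ and, for contradiction, that $\liminf_{t\to\infty}\rho(t)=0$; pick $s_k\to\infty$ with $\rho(s_k)\to 0$ and $s_k>a_2$. The left inequality in (\ref{est:rho1}) --- which, as noted after Lemma~\ref{lemma1}, follows from (\ref{pt}), the bound $\beta>\delta$ on $(b_1,b_2)$ and boundedness of $P$ --- gives $\int_{s_k-b_2}^{s_k-b_1}\rho(\tau)\,d\tau\le\frac1{c_1}\rho(s_k)=:\Lambda_k\to 0$. Applying Lemma~\ref{lemma1} with $\hat t=a_\dagger$, $\beta_1=b_1$, $\beta_2=b_2$, $c_3=1$ and $t^*=s_k$, we obtain, for all large $k$, $\max_{s_k-a_\dagger\le\tau\le s_k}\rho(\tau)\le c^*\Lambda_k$, hence $\rho<\rho^*$ on all of $(s_k-a_\dagger,s_k)$ once $k$ is large. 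Lemma~\ref{close enough} then yields $\rho,P,Q\to 0$, contradicting $\rho\not\to 0$; therefore $\liminf_{t\to\infty}\rho(t)=:2\varepsilon_\rho>0$ and $\rho(t)>\varepsilon_\rho$ for $t\ge T$.

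Finally I would pass this lower bound to $P$ and $Q$. For $t\ge T+a_\dagger$, (\ref{Pt}) gives $P(t)\ge\int_{p_1}^{p_2}p(t-x)\rho(x)e^{-\int_x^t\mu(v-x,P(v))\,dv}\,dx$; on this range $p>\delta$ and $\rho(x)>\varepsilon_\rho$, while a standard estimate using the boundedness of $P$ (Theorem~\ref{theoremboundedness}), the local integrability of $\mu_0$, and the Lipschitz bound in \ref{H'2} shows the survival factor is bounded below by a positive constant; hence $P(t)\ge\varepsilon_P>0$ for large $t$. The same computation applied to $q$ --- a non-trivial bounded weight, hence bounded below on a set of positive measure, which is exactly what is needed for the lower bound on $Q$ in Theorem~\ref{persistencetheorem} --- gives $Q(t)\ge\varepsilon_Q>0$. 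Combining, either $\rho,P,Q\to 0$ (extinction) or all three are eventually bounded below (persistence). I expect the main obstacle to be the rigidity step of the third paragraph: a priori $\rho$ could oscillate with $\liminf\rho=0<\limsup\rho$, and excluding this needs precisely the interplay between the renewal lower bound (one small value of $\rho$ makes a backward window of $\rho$ have small integral), Lemma~\ref{lemma1} (that integral controls $\rho$ pointwise over a window of length $a_\dagger$), and Lemma~\ref{close enough} (which turns such a window into global extinction).
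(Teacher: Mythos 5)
Your proposal is correct and follows essentially the same route as the paper: a small value $\rho(t_k)$ is converted via the renewal lower bound into a small integral of $\rho$ over a backward window, Lemma~\ref{lemma1} upgrades this to smallness of $\rho$ on an interval of length $a_\dagger$, and Lemma~\ref{close enough} then forces $\rho,P,Q\to 0$, giving exactly the stated dichotomy. The extra details you supply (choosing $P^*,Q^*$ by continuity of $R$ at $(0,0)$ since $R_0<1$, and noting that the lower bounds for $P$ and $Q$ require positivity of the weights and of the survival factor rather than just \ref{H'2}--\ref{H'3}) simply make explicit steps the paper leaves implicit.
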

\begin{proof}
If $\varepsilon_\rho$ exist, then from \ref{H'2} and \ref{H'3} it follows that $\varepsilon_P$ and $\varepsilon_Q$ necessarily exists.
Conversely, if $\varepsilon_\rho$ does not exist, then there exist a sequence $t_k>a_\dagger$ such that $t_k\rightarrow \infty $ and $\rho_{t_k}\rightarrow 0$ as $k\rightarrow\infty$. Let $\varepsilon>0$ be arbitrary. There exist $K$ such that if $k>K$ then $\rho(t_k)<\varepsilon$. For these $k$ we have
\begin{equation*}
\int_0^{t_k}\beta(a,Q(t_k))\rho(t_k-a)e^{-\int_0^a\mu(v,P(v+t_k-a))dv}\,da<\varepsilon.
\end{equation*}
This together with Theorem \ref{theoremboundedness} and \ref{H'2} implies that there exists $C>0$ such that
 \begin{equation*}
 \int_{t-b_2}^{t-b_1}\rho(t-a)da<C\varepsilon
 \end{equation*}
 Using Lemma \ref{lemma1} we get that there exist constants $t_1$ and $c^*$ independent from $\varepsilon$
such that if $t_k>t_1$ then
\begin{equation*}
\max_{t_k-a_\dagger\leq \tau \leq t_k}\rho(\tau)\leq c^*\varepsilon.
\end{equation*}
Choosing $\varepsilon$ to satisfy $c^*\varepsilon<\min(\frac{P^*}{\int_0^\infty p(a)da}, \frac{Q^*}{\int_0^\infty q(a)da})$ Lemma \ref{close enough} now guarantees that $\rho\rightarrow 0$ and we reach a contradiction.
\end{proof}

\section{Equilibrium points of \eqref{rho}--\eqref{Q0} and stability analysis}
Here we derive the equilibrium points to our model,  i.e. solutions constant in time, and then continue with performing a stability analysis for these equilibrium points. A similar analysis of equilibria and their stability has been done in  to Section 5.3 and Chapter 6 of \cite{Iannelli} for a more general model including arbitrary many weighted sizes. We derive some explicit criteria in our case by means of the characteristic equation \eqref{main} below and then relate the obtained threshold parameter to an analogue of the net reproductive rate $R_0$.

We assume that $(\rho^*,P^*,Q^*)$ are constant solutions. From equations $(\ref{rho}),(\ref{P0})$ and $(\ref{Q0})$ we get
\begin{equation*}
\begin{aligned}
\rho^* &=\int_0^\infty\beta(a,Q^*)\rho^*e^{-\int_0^a \mu(v,P^*)\,dv}\,da, \\
P^* &=\int_0^\infty p(a)\rho^*e^{-\int_0^{a}\mu(v,P^*)\,dv}\,da,\\
Q^* &=\int_0^\infty p(a)\rho^*e^{-\int_0^{a}\mu(v,P^*)\,dv}\,da.
\end{aligned}
\end{equation*}
If $\rho^*=0$ it follows that $(\rho^*,P^*,q^*)=(0,0,0)$. Otherwise, we have that
\begin{equation*}
\begin{aligned}
\rho^* &=\frac{P^*}{\int_0^\infty p(a)e^{-\int_0^{a}\mu(v,P^*))dv}da},
\end{aligned}
\end{equation*}
\begin{equation*}
Q^*=P^*\Gamma(P^*),
\end{equation*}
where
\begin{equation*}
   \Gamma(P^*):= \frac{\int_0^\infty q(a)e^{-\int_0^a \mu(v,P^*)dv}da}{\int_0^\infty p(a)e^{-\int_0^a \mu(v,P^*)dv}da}
\end{equation*}
and
\begin{equation}\label{solveforP^*}
1 =\int_0^\infty\beta(a,P^*\Gamma(P^*))e^{-\int_0^a \mu(v,P^*)dv}da.
\end{equation}
So first we solve the last equation (\ref{solveforP^*}) with respect to $P^*$ and then compute $Q^*$ and $\rho^*$ from the other two equations. If $R_0>1$ then a solution necessarily exists since the right-hand side of (\ref{solveforP^*}) goes continuously from $R_0$ to zero as $P^*$ goes to infinity.

Let $\rho^*(a)$ be an equilibrium point. We set $$n(a,t)=\rho^*(a)+z(a,t), \quad \text{where $z$ is a small perturbation} $$
For this analysis we need to assume that $\mu$ and $\beta$ are differentiable with respect to the second argument. We denote $\mu_P$ and $\beta_Q$ the derivative of $\mu$ and $\beta$ with respect to the second argument.
Linearising equation (\ref{gen1}) we get
\begin{align*}
&\frac{\partial z(a,t)}{\partial t}+\frac{\partial \rho^*(a))}{\partial a}+\frac{\partial z(a,t)}{\partial a}
\\
&=-\mu(a,P(t))(\rho^*(a)+z(a,t))\\&=
-(\mu(a,P^*)+\mathcal{P}(t)\mu_P(a,P^*))(\rho^*(a)+z(a,t))
\\
&= -\mu(a,P^*)\rho^*(a)-\mathcal{P}(t           )\mu_P(a,P^*)\rho^*(a)-\mu(a,P^*)z(a,t).
\end{align*}
From equation (\ref{genbc1}) it follows
\begin{align*}
\rho^*(0)+z(0,t) &=\int_0^{a_\dagger}\beta(a,Q(t))\left(\rho^*(a)+z(a,t)\right)da
\\
&= \int_0^{\infty}(\beta(a,Q^*)+\mathcal{Q}(t)\beta_Q(a,Q^*))\left(\rho^*(a)+z(a,t)\right)da,
\\
&=\int_0^\infty \beta(a,Q^*)\rho^*(a)+\beta(a,Q^*)z(a,t)+\mathcal{Q}(t)\beta_Q(a,Q^*)\rho^*(a)\,da,
\end{align*}
and from \ref{p} and \ref{q} we get
\begin{align*}
P(t) &=\int_0^\infty p(a)(\rho^*(a)+z(a,t))da=P^*+\mathcal{P}(t),\quad \mathcal{P}(t)=\int_0^\infty p(a)z(a,t)da,
\\
Q(t) &=\int_0^\infty q(a)(\rho^*(a)+z(a,t))da=P^*+\mathcal{Q}(t),\quad \mathcal{Q}(t)=\int_0^\infty q(a)z(a,t)da.
\end{align*}
Using the fact that $\rho^*(a)$ is an equilibrium point we get
\begin{align*}
\frac{\partial z(a,t)}{\partial t}+\frac{\partial z(a,t)}{\partial a}
&= -\mathcal{P}(t)\mu_P(a,P^*)\rho^*(a)-\mu(a,P^*)z(a,t),
\\
z(0,t)&=\int_0^\infty \left(\mathcal{Q}(t)\beta_Q(a,Q^*)\rho^*(a)+\beta(a,Q^*)z(a,t)\right)\,da,
\\
 \mathcal{P}(t)&=\int_0^\infty p(a)z(a,t)\,da,
\\
 \mathcal{Q}(t)&=\int_0^\infty q(a)z(a,t)\,da.
\end{align*}
We look for solutions of the following form
\begin{align*}
&z(a,t)=g(a)e^{\lambda t}, \quad z(0,t)=C_1e^{\lambda t},
\quad \mathcal{P}(t)=C_2e^{\lambda t},
\quad \mathcal{Q}(t)=C_3e^{\lambda t}.
\end{align*}
Substituting these data we get the following
  \begin{align}
g(a)\lambda +\frac{d g(a)}{d a}
&= -C_2\mu_P(a,P^*)\rho^*(a)-\mu(a,P^*)g(a),\label{firstorder}
\\
C_1 &=\int_0^\infty C_3\beta_Q(a,Q^*)\rho^*(a)+\beta(a,Q^*)g(a)\,da, \label{eqC1}
\\
C_2 &=\int_0^\infty p(a)g(a)\,da, \label{C_3fromC_1}
\\
C_3 &=\int_0^\infty q(a)g(a)\,da. \label{eqC3}
 \end{align}
Equation (\ref{firstorder}) can solved with respect to $g(a)$ using the integrating factor method, with the solution
\begin{align}
&g(a)= \frac{C_1-\int_0^a C_2\mu_P(\sigma,P^*)\rho^*(\sigma)e^{ \sigma\lambda +\int_0^\sigma\mu(\tau,P^*)d\tau} d\sigma}{e^{ a\lambda +\int_0^a\mu(\sigma,P^*)d\sigma}}.
\label{C_1fromC_3}
 \end{align}
Inserting (\ref{C_1fromC_3}) in the system of equations (\ref{eqC1})--(\ref{eqC3}), we get
  \begin{align*}
C_1 &=\int_0^\infty\left( C_3\beta_Q(a,Q^*)\rho^*(a)\right.
\\
& \quad \left. +\beta(a,Q^*)\frac{C_1-\int_0^a C_2\mu_P(\sigma,P^*)\rho^*(\sigma)e^{ \sigma\lambda +\int_0^\sigma\mu(\tau,P^*)d\tau} d\sigma}{e^{\int_0^a \lambda +\mu(\sigma,P^*)d\sigma}}\right)da,
\\
C_2 &= \int_0^\infty p(a)\frac{C_1-\int_0^a C_2\mu_P(\sigma,P^*)\rho^*(\sigma)e^{ \sigma\lambda +\int_0^\sigma\mu(\tau,P^*)d\tau} d\sigma}{e^{ a\lambda +\int_0^a\mu(\sigma,P^*)d\sigma}}\,da,
\\
C_3 &=\int_0^\infty q(a)\frac{C_1-\int_0^a C_2\mu_P(\sigma,P^*)\rho^*(\sigma)e^{ \sigma\lambda +\int_0^\sigma\mu(\tau,P^*)d\tau} d\sigma}{e^{ a\lambda +\int_0^a\mu(\sigma,P^*)d\sigma}}\,da.
 \end{align*}
 To simplify calculations we introduce the following notations
 \begin{align*}
 A_1 &=\int_0^\infty \beta_Q(a,Q^*)\rho^*(a)\,da, \\
 A_2(\lambda) &=-\int_0^{\infty}\beta(a,Q^*)\int_0^a \mu_P(\sigma,P^*)\rho^*(\sigma)e^{-\sigma\lambda-\int_{a-\sigma}^a\mu(\tau,P^*)d\tau}d\sigma da, \\
A_3(\lambda) &=\int_0^\infty \beta(a,Q^*)e^{-a\lambda-\int_0^a \mu(\tau,P^*)d\tau}da, \\
 A_4(\lambda) &=-\int_0^\infty p(a)\int_0^a\mu_P(\sigma,P^*)\rho^*(\sigma)e^{ -\sigma\lambda-\int_{a-\sigma}^a\mu(\tau,P^*)d\tau}d\sigma da, \\
 A_5(\lambda) &=\int_0^\infty p(a)e^{ -a\lambda-\int_0^a\mu(\tau,P^*)d\tau}da, \\
 A_6(\lambda) &=-\int_0^\infty q(a)\int_0^a\mu_P(\sigma,P^*)\rho^*(\sigma)e^{ -\sigma\lambda-\int_{a-\sigma}^a\mu(\tau,P^*)d\tau}d\sigma da, \\
A_7(\lambda) &= \int_0^\infty q(a)e^{ -a\lambda-\int_0^a\mu(\tau,P^*)d\tau}da.
 \end{align*}
 With the notations above the system of equations can be written
 \begin{align}\label{matrixeq}
 \left(\begin{matrix}
 A_3(\lambda)-1 && A_2(\lambda) && A_1
 \\
 A_5(\lambda) && A_4(\lambda)-1 && 0
 \\
 A_7(\lambda) && A_6(\lambda) && -1
\end{matrix}\right)
\left(
\begin{matrix}
C_1 \\ C_2 \\ C_3
\end{matrix}
\right)
=0.
 \end{align}
 There exist small non-zero solutions $C_1,C_2$ and $C_3$ to (\ref{matrixeq}) if and only if
 \begin{equation}\label{main}
 \text{det}\left(\begin{matrix}
 A_3(\lambda)-1 && A_2(\lambda) && A_1
 \\
 A_5(\lambda) && A_4(\lambda)-1 && 0
 \\
 A_7(\lambda) && A_6(\lambda) && -1
\end{matrix}\right)
=0.
 \end{equation}
For the trivial equilibrium we get
 \begin{equation}
A_3(\lambda)=
 \int_0^\infty \beta(a,0)e^{-a\lambda-\int_{0}^{a}\mu(\tau,0)d\tau}da=1. \label{realgood}
 \end{equation}
 If we let $\text{Re}(\lambda)=\gamma$ and Im$(\lambda)=\phi$ (\ref{realgood}) turns into
 \begin{align}
 \int_0^\infty\beta(a,0)e^{-a\gamma-\int_0^{a}\mu(\tau,0)d\tau}e^{-a\phi i}\,da
 &=\text{Re}\,A_3(\gamma,\phi)+i\,\text{Im}\,A_3(\gamma,\phi)=1\label{imaginary}
 \end{align}
 where
 \begin{align*}
 \text{Re}\,A_3(\gamma,\phi) &= \int_0^\infty\beta(a,0)e^{-a\gamma-\int_0^a\mu(\tau,0)d\tau}\cos(a\phi)\,da, \\
 \text{Im}\,A_3(\gamma,\phi)&=
 -\int_0^\infty\beta(a,0)e^{-a\gamma-\int_0^a \mu(\tau,0)d\tau}\sin(a\phi)\,da.
 \end{align*}

 We observe that $\text{Re}\,A_3(\cdot,0):\mathbb{R}\rightarrow (0,\infty)$ is strictly decreasing and onto, so that under the made assumptions, the equation
\begin{equation*}
 \text{Re}\,A_3(\gamma,0)=1
 \end{equation*}
  has a unique solution $\gamma^*$. Furthermore, $\text{Re}\,A_3(\gamma^*,\cdot)$ has its unique maximum when $\phi=0$. Then for all solutions with $\phi\neq 0$ to equation (\ref{imaginary}), we have $\gamma<\gamma^*$.
  Let
  \begin{equation*}
  R_0=\text{Re}\,A_3(0,0)=\int_0^\infty\beta(a,0)e^{-\int_0^a \mu(\tau,0)d\tau}da
  \end{equation*}
  If $R_0<1$, we have that $\gamma^*<0$, implying $\gamma<0$ for all solutions and we can conclude that the trivial equilibrium point is stable.
  If $R_0>1$, we have that $\gamma^*>0$ and the trivial equilibrium point is unstable.


\begin{appendix}

\section{Proof of Lemma \ref{lemma_boundedness}}\label{sec:appLem31}

If the right-hand side of $(\ref{globalbound})$ is greater or equal to the right-hand side of $(\ref{localbound})$, then $(\ref{globalbound})$ follows from $(\ref{localbound})$. Let $\gamma=1-\psi(0)$. Assume that there exist $T\geq 0$ such that
\begin{equation*}\max\limits_{x\leq T}\frac{\rho(x)}{\psi(\frac{\rho(x)}{c})+\gamma}>\max\limits_{0<k\leq c\psi^{-1}(M-\gamma)}\frac{k}{\psi(\frac{k}{c})+\gamma}.
\end{equation*}
Due to the condition on $M$, we have that $T>0$. There exists $0<t_1\leq T$ such that
\begin{equation*}\frac{\rho(t_1)}{\psi(\frac{\rho(t_1)}{c})+\gamma}=\max\limits_{x\leq T}\frac{\rho(x)}{\psi(\frac{\rho(x)}{c})+\gamma}=\max\limits_{x\leq t_1}\frac{\rho(x)}{\psi(\frac{\rho(x)}{c})+\gamma},
\end{equation*}
and since
\begin{equation*}\frac{\rho(t_1)}{\psi(\frac{\rho(t_1)}{c})+\gamma}>\max\limits_{0<k\leq c\psi^{-1}(M-\gamma)}\frac{k}{\psi(\frac{k}{c})+\gamma},
\end{equation*}
 we have that $\rho(t_1)>c\psi^{-1}(M-\gamma)$. Note that by the definition of $\psi^{-1}$ this means that $\psi(\frac{\rho(t_1)}{c})>\psi(\frac{c\psi^{-1}(M-\gamma)}{c})=M-\gamma$. Now from $(\ref{localbound})$  we get
\begin{equation*}
\rho(t_1)\leq M\max\limits_{x\leq t_1}\frac{\rho(x)}{\psi(\frac{\rho(x)}{c})+\gamma}=M\frac{\rho(t_1)}{\psi(\frac{\rho(t_1)}{c})+\gamma}<\rho(t_1)
\end{equation*}
and we reach a contradiction. This proves the lemma.

\section{Proof of lemma \ref{lemma1}}\label{sec:appLem52}

\begin{figure}[!ht]
\begin{tikzpicture}[scale=1.1]
\draw[very thin,color=gray] (0,0) grid (9,6);
\draw[->,name path=xaxis] (0,0) -- (8.2,0) node[right] {$s$};
\draw[->,name path=yaxis] (0,0) -- (0,5.2) node[above] {$\tau$};

\draw[very thick,color=blue,name path=plot,domain=1:3] plot (\x,{-1+2*\x});
\draw[very thick,color=blue,name path=plot,domain=6:8] plot (\x,{-11+2*\x});
\draw[very thick,color=blue,name path=plot,domain=1:6] plot (\x,1);
\draw[very thick,color=blue,name path=plot,domain=3:8] plot (\x,5);
\draw[very thick,color=pink,name path=plot,domain=1:5] plot (3,\x);
\draw[very thick,color=pink,name path=plot,domain=1:5] plot (6,\x);
\draw[->,very thick,color=black,name path=plot,domain=1.5:2.33] plot (\x,6-\x);
\draw[<-,very thick,color=black,name path=plot,domain=6.5:7.5]
 plot (\x,8.5-\x);
 \draw[->,very thick,color=black,name path=plot,domain=0.75:1.75]
 plot(\x,3.75-\x);
 \draw[-,very thick,color=black,name path=plot,domain=0.75:1.75]
 plot(\x,3.75-\x);

\node[scale=1] (X) at (3,0) {$|$};
\node[below,scale=1] (X) at (3,0) {\rotatebox{90}{$t-\beta_1-b_2$}};
\node[scale=1] (X) at (8,0) { $|$};
\node[below,scale=1] (X) at (8,0) {\rotatebox{90}{$t-\beta_1-b_1$}};
\node[scale=1] (X) at (1,0) { $|$};
\node[below,scale=1] (X) at (1,0) {\rotatebox{90}{$t-\beta_2-b_2$}};
\node[scale=1] (X) at (6,0) { $|$};
\node[below, scale=1] (X) at (6,0) { \rotatebox{90}{$t-\beta_2-b_1$}};
\node[scale=1] (X) at (2,0) { $|$};
\node[below,scale=1] (X) at (2,0) { \rotatebox{90}{$t-\beta_2-b'_2$}};
\node[scale=1] (X) at (7,0) { $|$};
\node[below,scale=1] (X) at (7,0) { \rotatebox{90}{$t-\beta_1-b'_1$}};
\node[left,scale=1] (X) at (0,1) {$t-\beta_2$};
\node[scale=1] (X) at (0,1) { $-$};
\node[left,scale=1] (X) at (0,5) {$t-\beta_1$};
\node[scale=1] (X) at (0,5) { $-$};
\node[above] (X) at (1.5,4.5) {$\tau=s+b_2$};
\node[below] (X) at (7.5,1) {$\tau=s+b_1$};
\fill[teal!] (2,1) -- (2,3) -- (3,5) -- (7,5) -- (7,3)--(6,1)--(6,1);
\end{tikzpicture}
\caption{\label{figure}
The parallelogram is the area over which $\rho$ is integrated in (\ref{parallelogram}). The colored area is the domain over which $\rho$ is integrated in (\ref{teal}).
}\label{fig1}
\end{figure}
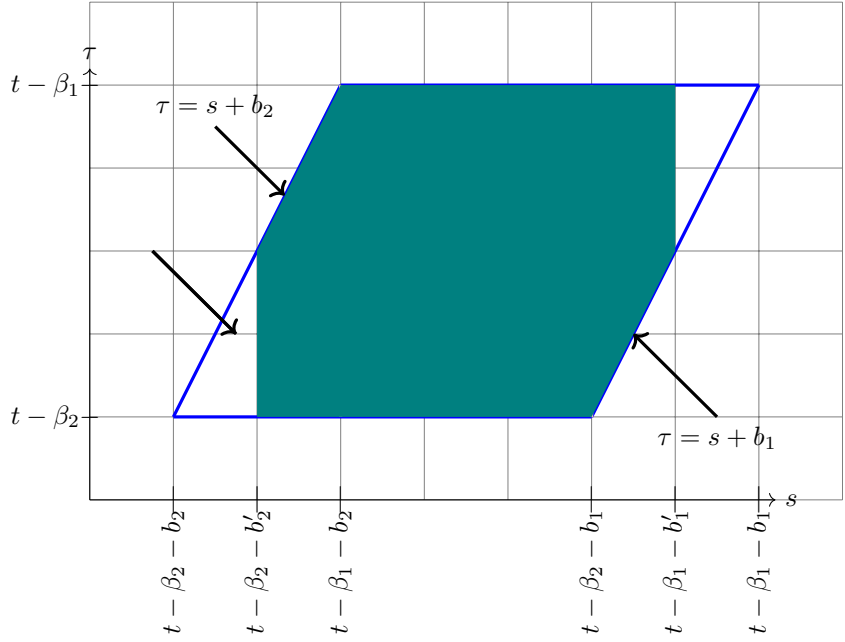

We will repeatedly use the following identity:
 For constants ${\beta_1<\beta_2}$ and $b_1<b_2$ we have
\begin{align}\label{order}
\int_{t-\beta_2}^{t-\beta_1}\int_{\tau-b_2}^{\tau-b_1}\rho(s)\,ds\,d\tau= \int_{t-\beta_2-b_2}^{t-\beta_1-b_1}\int_{\max(t-\beta_2,s+b_1)}^{\min(t-\beta_1,s+b_2)}\rho(\tau)\,d\tau\,ds.
\end{align}
Figure \ref{figure} in shows this fact.

Using the left hand side of (\ref{est:rho1}) to estimate (\ref{est:rho2}) from below we get
\begin{align}
&c_3 \Lambda \geq \int_{t^*-\beta_2}^{t^*-\beta_1}\rho(\tau)d\tau \geq \int_{t^*-\beta_2}^{t^*-\beta_1}\int_{\tau-b_1}^{\tau-b_2}\rho(\tau)\,d\tau\,ds.
\end{align}
Let $b'_1,b'_2$ be constants such that $b_1<b'_1<b'_2<b_2$. By (\ref{order}) we get
\begin{align}\label{parallelogram}
c_3\Lambda &\geq c_1\int_{t^*-\beta_2-b_2}^{t-\beta_1-b_1}\int_{\max(t^*-\beta_2,s+b_1)}^{\min(t^*-\beta_1,s+b_2)}\rho(\tau)\,d\tau\,ds
\\
\label{teal}&\geq c_1\int_{t-\beta_2-b'_2}^{t-\beta_1-b'_1}\int_{\max(t-\beta_2,s+b_1)}^{\min(t-\beta_1,s+b_2)}\rho(\tau)\,d\tau\,ds.
\end{align}
Since $\int_{\max(t-\beta_2,s+b_1)}^{\min(t-\beta_1,s+b_2)}\rho(\tau)d\tau>\gamma>0$ is bounded from below on the interval
\newline
${s\in[t-\beta_2-b'_2,t-\beta_1-b'_1]}$ (see Figure~\ref{fig1}) we get that
\begin{equation}
\gamma c_1\int_{t^*-\beta_2-b'_2}^{t^*-\beta_1-b'_1}\rho(s)\,ds \le c'_k\Lambda.
\end{equation}

Iterating $k$ times we get
\begin{align}\label{est:rho4}
\int_{t^*-\beta_2-kb'_2}^{t^*-\beta_1-kb'_1}\rho(s)\,ds \le c'\Lambda,
\end{align}
for some $c'>0$ depending on $b_1,b_2,b'_1,b'_2,m,\beta_1,\beta_2$ and $k$.
Using the right-hand side of  (\ref{est:rho1}) we derive from the previous inequality

\begin{align*}
\rho(T)\le c_2\int_{T-a_2}^{T-a_1}\rho(\tau)\,d\tau \le c_2\int_{t^*-\beta_2-kb'_2}^{t^*-\beta_1-kb'_1}\rho(\tau)\,d\tau \le c_2c'\Lambda,
\end{align*}
where $d_1=t^*-\beta_2-kb'_2+a_2 \le T \le t^*-\beta_1-kb'_1+a_1=d_2$. Here $d_1$ and $d_2$ are  required to be bigger then $a_2$ which gives us the value of $t_1$
\begin{equation}
t_1=\beta_2+kb'_2.
\end{equation}
We assume that $k$ is chosen large enough to satisfy  $d_2-d_1 >a_2-a_1$. Then for $T\in[d_2,d_2+a_1]$ we have
$$\rho(T) \le c\int_{T-a_2}^{T-a_1}\rho(\tau)\,d\tau\le c\int_{T-a_2}^{T-a_1}c'\Lambda\,d\tau \le cc'(a_2-a_1)\Lambda$$
according to (\ref{est:rho4}). Hence, there exist a constant $c_1$ such that
$$\rho(T)\le c\Lambda \quad\mbox{for $d_1\le T\le d_2+a_2$.}$$
Continuing this procedure we get a constant $\widetilde c$ such that
$$\rho(T) \le \widetilde c\Lambda \quad\mbox{for $d_1\le T\le d_2+la_2$.}$$
Choosing $k$ large enough to satisfy $d_1\le t^*-\hat t$ and then choosing $l$ large enough to satisfy $d_2+la_2\ge t^*$, we arrive at (\ref{est:rho3}).


\end{appendix}

\nocite{*}
\bibliographystyle{plain}


%

\end{document}